\newtheorem{theo}{Theorem}[section]
\newtheorem{lem}[theo]{Lemma}
\newtheorem{prop}[theo]{Proposition}
\newtheorem{cor}[theo]{Corollary}
\providecommand{\customgenericname}{}
\newcommand{\newcustomtheorem}[2]{%
  \newenvironment{#1}[1]
  {%
   \renewcommand\customgenericname{#2}%
   \renewcommand\theinnercustomgeneric{##1}%
   \innercustomgeneric
  }
  {\endinnercustomgeneric}
}
 \theoremstyle{definition}
\newtheorem{dfn}[theo]{Definition}
\newtheorem{ex}[theo]{Example}
\newtheorem{fact}[theo]{Fact}
\newtheorem{rem}[theo]{Remark}
\newcommand{\N}{\ensuremath{\mathbb{N}}}  
\newcommand{\Z}{\ensuremath{\mathbb{Z}}}
\newcommand{\R}{\ensuremath{\mathbb{R}}}
\newcommand{\M}{\ensuremath{\mathcal{M}}}
\newcommand{\Rs}{\ensuremath{\mathcal{R}}}
\newcommand{\Ks}{\ensuremath{\mathcal{K}}}
\newcommand{\vs}{\vspace{0.2cm}}
\newcommand{\mtf}{\ensuremath{\mathcal{N}}}
\newcommand{\df}{\rangle_\mathrm{def}}
\newcommand{\inv}{^{-1}}
\newcommand{\la}{\langle}
\newcommand{\ra}{\rangle}
\DeclareMathOperator{\GL}{GL} 
\DeclareMathOperator{\SL}{SL}
\DeclareMathOperator{\UT}{UT}
\renewcommand{\geq}{\geqslant}
\begin{document}

\author{Annalisa Conversano}

\title{A Jordan-Chevalley decomposition beyond algebraic groups}

\address{School of Mathematical and Computational Sciences,
Massey University, Auckland, NZ}

\email{a.conversano@massey.ac.nz }

\noindent
\date{3 April 2025  \\
 \emph{2020MSC}: 03C64, 20G07, 22E15. \emph{Keywords}: Definable groups; o-minimality; Levi decomposition; Jordan-Chevalley decomposition; $p$-Sylow subgroups}

\maketitle

\begin{abstract} 
We prove a decomposition of definable groups in o-minimal structures generalizing   
the Jordan-Chevalley decomposition of linear algebraic groups. It follows that any definable linear group $G$ is a semidirect product of its maximal normal definable torsion-free subgroup $\mtf(G)$ and a definable subgroup $P$, unique up to conjugacy, definably isomorphic to a semialgebraic group.    

Along the way, we establish two other fundamental decompositions of classical groups in arbitrary o-minimal structures: 1) a Levi decomposition and 2) a key decomposition of disconnected groups, relying on a generalization of Frattini's argument to the o-minimal setting.  In o-minimal structures, together with $p$-groups, $0$-groups play a crucial role. We give a characterization of both classes and show that definable $p$-groups are solvable, like finite $p$-groups, but they are not necessarily nilpotent. Furthermore, we prove that definable $p$-groups ($p=0$ or $p$ prime) are definably generated by torsion elements and, in definably connected groups, $0$-Sylow subgroups coincide with $p$-Sylow subgroups for each $p$ prime.

 \end{abstract}

\thispagestyle{empty}

\vs
\section{Introduction}

\noindent
The Jordan decomposition allows us to write any invertible matrix over a perfect field as a product of a unipotent matrix by a diagonalizable matrix. In a linear algebraic group, the two factors of the decomposition belong to the group as well and do not depend on the linear representation, yielding the so-called Jordan-Chevalley decomposition: every linear algebraic group is a semidirect product of its unipotent radical and a reductive group (see, for example, Chap 1, Section 4 in \cite{borel}). We recall below a few more details about the subgroups component of the decomposition. 

\begin{fact} \label{fact:real-algebraic}
Let $G$ be a linear algebraic group. Then $G$ can be decomposed as
\[
G = N \rtimes P
\]

\noindent
where $N$ is a closed connected nilpotent torsion-free group and $P = F H$, where $F < P$ is finite and $H = P^0$ is the product of a semisimple group $S = [H, H]$ by an algebraic torus $T = Z(H)^0$. Any other semidirect cofactor of $N$ is a conjugate of $P$.
 \end{fact}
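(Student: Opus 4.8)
The plan is to realize the decomposition as a Levi--Mostow splitting of $G$ over its unipotent radical and then to analyse the reductive complement by the classical structure theory. Throughout I use that the setting is characteristic zero (as forced by the o-minimal context), so that every unipotent group is connected, nilpotent and torsion-free.

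First I would set $N = R_u(G)$, the unipotent radical, i.e.\ the largest closed connected normal unipotent subgroup of $G$. It is closed, connected and normal by construction; being unipotent in characteristic zero it embeds into a group of upper unitriangular matrices, hence is nilpotent, and the logarithm shows it is torsion-free (a relation $u^n = 1$ with $u = \exp(X)$, $X$ nilpotent, forces $nX = 0$, so $X = 0$). This produces the factor $N$ with all its stated properties. Next I would obtain $P$ as a maximal reductive subgroup complementing $N$: by Mostow's theorem on fully reducible subgroups (valid in characteristic zero) there is a reductive $P \le G$ with $G = N \rtimes P$. Its identity component $H = P^0$ is connected reductive, so by the standard structure theory (Borel, Chap.~1) it is the almost-direct product $H = [H,H]\cdot Z(H)^0$ of the semisimple group $S = [H,H]$ and the central torus $T = Z(H)^0$, which is precisely $H = ST$.

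It remains to extract the finite factor $F$ with $P = FH$, i.e.\ to lift the finite component group $\pi_0(P) = P/P^0$ to a genuine finite subgroup. A clean first step is the Jordan decomposition itself: for $g \in P$ the unipotent part $g_u$ is again unipotent in $P$, hence maps to a unipotent element of the finite group $\pi_0(P)$, which is trivial, so $g_u \in H$ and the semisimple part $g_s = g g_u^{-1}$ has the same image in $\pi_0(P)$ as $g$. Thus every coset of $H$ has a semisimple representative. After modding out the characteristic factor $S$, one is left with a group whose identity component is a torus, where divisibility of the torus lets one replace these representatives by elements of finite order with unchanged image in $\pi_0(P)$; finitely many torsion elements generate a finite subgroup $F$ surjecting onto $\pi_0(P)$, giving $P = FH$.

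Finally, the assertion that every other semidirect cofactor of $N$ is conjugate to $P$ follows from Mostow's uniqueness clause: any $P'$ with $G = N \rtimes P'$ satisfies $P' \cong G/N$, which is reductive, so $P'$ is a maximal reductive complement and is therefore conjugate to $P$ by an element of $N$. The main obstacle is this conjugacy together with the finite lift: the $N$-conjugacy of reductive complements is the one genuinely deep classical input on which the whole decomposition rests, and the construction of $F$ must be handled with care precisely because $\pi_0(P)$ may act nontrivially on the central torus, so the relevant quotient is only an extension of a finite group by a torus rather than an abelian group and the naive torsion argument needs the divisibility input above to go through.
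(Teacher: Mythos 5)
The paper does not actually prove this statement: it is recorded as a known \emph{Fact} with a pointer to Borel (Chap.~1, \S 4), so there is no in-paper argument to measure yours against. Your architecture --- $N = R_u(G)$, a Mostow--Levi reductive complement $P$ with $G = N \rtimes P$, the decomposition of the connected reductive group $H = P^0$ as $[H,H]\cdot Z(H)^0$, and Mostow's conjugacy theorem for the uniqueness clause --- is the standard route, and you are right that the $N$-conjugacy of reductive complements is the one deep input.

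There is, however, a genuine gap in your construction of the finite subgroup $F$. After passing to $Q = P/S$, whose identity component is a torus $\bar T$, you pick a torsion representative in each component and assert that ``finitely many torsion elements generate a finite subgroup.'' That is false in general: already in $\Or_2$ two involutions outside $\SO_2$ generate an infinite dihedral group whenever their product is a non-torsion rotation, and for a component group of order greater than $2$ independently chosen torsion representatives will generically generate an infinite subgroup. Divisibility must enter differently: with $m = |Q/\bar T|$, the class of $1 \to \bar T \to Q \to Q/\bar T \to 1$ in $H^2(Q/\bar T, \bar T)$ is killed by $m$, and the long exact sequence attached to $1 \to \bar T[m] \to \bar T \xrightarrow{m} \bar T \to 1$ shows this class comes from $H^2(Q/\bar T, \bar T[m])$; a cocycle valued in the finite group $\bar T[m]$ then yields a finite $F$ with $Q = F\bar T$. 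This is precisely the mechanism the paper itself deploys in its o-minimal analogues (\Cref{lem:G0torsionfree} for uniquely divisible kernels, \Cref{rem:compact-G0abelian} for the compact case). A second, unaddressed point: having found a finite $\bar F \le P/S$, you must still lift it through the semisimple kernel $S$, i.e.\ show that an algebraic group whose identity component is semisimple contains a finite subgroup meeting every component; this is true in characteristic $0$ but is a further nontrivial step (e.g.\ pass to the normalizer of a maximal torus, whose quotient by the torus is finite-by-Weyl, and apply the torus case again).
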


This paper discusses the above decomposition for groups definable  in an arbitrary o-minimal structure \M\ (see \Cref{theo:decomposition},   \Cref{theo:Nsplitting} and \Cref{cor:cor-linear}).
This class includes, but it is not limited to, finite groups, Nash groups, algebraic groups over an algebraically closed field of characteristic $0$, semialgebraic groups over a real closed field, real or complex elliptic curves, several classes of connected Lie groups such as abelian, simply connected triangular, compact, semisimple with finite center and various extensions of the above. It is to be noted that many of these Lie groups, including the one presented in \Cref{ex:NcapS-infinite}, cannot be equipped with an algebraic group structure (nor they admit a faithful finite-dimensional representation), and the results of this paper go well beyond the algebraic (or linear) setting. However, this paper adds to a considerable volume of work  by several authors showing that definable groups in o-minimal structures are very closely related to algebraic groups.  See, for instance, \cite{BBO19, Berarducci-Mamino, BMO10, me-nilpotent,  HPP1, m2,  PPSI, PPSII, PPSIII}.

\medskip
When $G$ is connected, the Jordan-Chevalley decomposition is a refinement of the \emph{Levi decomposition} $G = RS$ into solvable by semisimple, where \emph{the solvable radical} $R$ is the largest solvable normal connected subgroup of $G$ (corresponding to $N \rtimes T$ in \Cref{fact:real-algebraic}) and $S$ is a \emph{Levi subgroup}, that is, a maximal semisimple subgroup, unique up to conjugation. Recall that a (definable) group $S$ is called \emph{semisimple} when the trivial subgroup is the only abelian (definably) connected normal (definable) subgroup. Equivalently, $S$ contains no infinite solvable normal (definably) connected (definable) subgroup (see, for instance Fact 3.1 in \cite{COP}). Semisimple (definable) groups are assumed to be (definably) connected.

In o-minimal structures, every definably connected group $G$ has a largest solvable normal \emph{definably connected} subgroup $R$ (that by analogy we call its solvable radical) and the quotient $G/R$ is a definable semisimple group (\cite[Remark 4.1]{diagram}). But there is not always a \emph{definable semisimple} group $S$ such that $G = RS$. The following is a small modification of Example 2.10 in \cite{CPI}.

\begin{ex} \label{ex:NcapS-infinite}
Let $\pi \colon \widetilde{\SL}_2(\R) \to \SL_2(\R)$ be the universal covering map of $\SL_2(\R)$ and let  $s \colon \SL_2(\R) \to \widetilde{\SL}_2(\R)$ be a section of $\pi$. 
Recall that $\widetilde{\SL}_2(\R)$ is a connected semisimple Lie group with infinite cyclic center and $\pi$ is a group homomorphism with central kernel. Thus the image of the 2-cocycle $h_s \colon \SL_2(\R)^2 \to \widetilde{\SL}_2(\R)$, given by $h_s(x, y) = s(x)s(y)s(xy)^{-1}$, is in the center of  $\widetilde{\SL}_2(\R)$. 

 Set $\mathcal{M} = (\R, <, +, \cdot)$. By \cite[\S 8]{HPP}, there is a \M-definable set $X \subset  \widetilde{\SL}_2(\R)$ and a \M-definable map $s \colon \SL_2(\R) \to X$ that is a section of $\pi$,
and $h_s$ is a \M-definable map whenever $s$ is \M-definable.

Let $G = \R\ \times\ \SL_2(\R)$ and consider the group operation on $G$ given by
\[
(a, x) \ast (b, y) = (a + b + h_s(x, y),\ xy).
\]

\vs \noindent  
$(G, \ast)$ is a semialgebraic Lie group with a unique maximal semisimple subgroup $S = [G, G] = \Z  \times \SL_2(\R)$, isomorphic to $\widetilde{\SL}_2(\R)$  by construction. The subgroup $S$ has an infinite cyclic center, so it is not \M-definable (nor it can be defined in any other o-minimal structure), but it is a countable directed union of \M-definable sets, hence \emph{ind-definable}.
 \end{ex}

It has been shown in 
\cite[Theorem 1.1]{CP-Levi} that, when the structure expands a real closed field, the example above is typical in that $G$ contains maximal \emph{ind-definable semisimple} subgroups $S$, all conjugate to each other, such that $G = RS$. These ind-definable subgroups correspond to the maximal semisimple Lie subalgebras of the Lie algebra of $G$, as it happens for Lie groups.

If the structure does not expand a field, definable groups may not have an associated Lie algebra, and it is not clear how a Levi decomposition can be obtained. In Section 3 we establish a generalization to arbitrary o-minimal structures that describes it 
in a novel way, in terms of minimality of perfect complements of the solvable radical:

\begin{theo} \label{theo:levi}
Every definably connected group $G$ contains a smallest perfect subgroup $S$ such that the restriction of the canonical homomorphism $G \to G/R$ to $S$ is surjective, where $R$ is the solvable radical of $G$.
The subgroup $S$ can be chosen to be ind-definable and locally definably connected. We call any such $S$ an \emph{ind-definable Levi subgroup} of $G$ and 
\[
G = RS
\] 

\vspace{.2cm}
an \emph{ind-definable Levi decomposition}. The ind-definable Levi subgroups of $G$ are precisely the conjugates of $S$.

If the structure expands a field, the ind-definable Levi subgroups coincide with the ind-definable semisimple Levi subgroups.

If the structure expands a group, $S$ is the subgroup generated by the image of a definable section $G/R \to G$ of the canonical homomorphism $G \to G/R$.
\end{theo}

Every definable group $G$ has a \emph{maximal normal definable torsion-free subgroup} $\mtf(G)$ that, like the subgroup $N$ in \Cref{fact:real-algebraic}, is always (definably) connected, (definably) contractible and completely solvable. In particular, both $N$ and $\mtf(G)$ are included in the solvable radical and the quotient is a (maximal) torus in both cases. Unlike the subgroup $N$ in \Cref{fact:real-algebraic} though, 
$\mtf(G)$ does not always have a semidirect cofactor (definable or not) in $G$. For instance, this is the case for the group $G$ in \Cref{ex:NcapS-infinite}, where $\mtf(G) = \R \times \{e\}$.   If $H$ were a cofactor of $\mtf(G)$, then $G = \mtf(G) \times H$, because $\mtf(G)$ is central, and $H \cong \SL_2(\R)$ would be a perfect subgroup coinciding with the commutator subgroup $[G, G] \cong \widetilde{\SL}_2(\R)$, contradiction.

However, even when $\mtf(G)$ is not a semidirect factor of $G$, in Section 5 we find a refinement of the Levi decomposition generalizing \Cref{fact:real-algebraic}, where the role of the unipotent radical $N$ is played by $\mtf(G)$. More precisely:

\begin{theo} \label{theo:decomposition}
Let $G$ be a definable group and set $N = \mtf(G) \subseteq G^0$.

\begin{enumerate}

\item
For each ind-definable Levi subgroup $S$ of $G^0$ there is a $0$-Sylow $T$ of the solvable radical that centralizes $S$ and such that  
\[
G^0 = NTS,
\]

where $R=NT$ is the solvable radical.

\noindent
Conversely, for every $0$-Sylow $T$ of the solvable radical of $G$, there is an ind-definable Levi subgroup $S$ that centralizes $T$, and $G^0 = NTS$. 

\medskip
\item For any decomposition $G^0 = NTS$ as in $(1)$, there is a finite subgroup 
\[
F \subset N_G(T) \cap N_G(S)  
\]

such that $G = FG^0$ and any product of subgroups from $\{F, N, T, S\}$ is still a subgroup of $G$.
\end{enumerate}
\end{theo}

For reader's convenience, \Cref{theo:levi} and \Cref{theo:decomposition} are presented separately, although the proof of the latter relies heavily, among other things, on the proof of the former. One may wonder if a simpler proof of \Cref{theo:decomposition} could be provided when the structure expands a field (and therefore a Levi decomposition is already known regardless of \Cref{theo:levi}). Although possible, we believe it to be unlikely. In fact, the most laborious and fundamental case of the proof appears to be when the ind-definable Levi subgroups are definable and semisimple, where assuming a field structure does not seem to provide an advantage.   

\bigskip
As noted before, the unique ind-definable semisimple Levi subgroup of the group $G$ in \Cref{ex:NcapS-infinite} is not definable. In Section 5 we show that this is a necessary condition for $\mtf(G)$ \emph{not} being a semidirect factor of $G$ or $G^0$:

 \begin{theo} \label{theo:Nsplitting}
 Let $G$ be a definable group such that $G^0$ has a definable semisimple Levi subgroup $S$.

\begin{enumerate}[(i)]

\item The definable exact sequence
\[
1\ \longrightarrow\ \mtf(G)\ \longrightarrow\ G^0\ \longrightarrow\  G^0/\mtf(G)\ \longrightarrow\ 1
\]

splits, and there is a semidirect cofactor $H$ of $\mtf(G)$ in $G^0$ such that $S = [H, H]$ and $H = TS$ for some abstract torus $T \subset Z(H)$ with finite intersection with $S$. 

Moreover, $H$ is definable if and only if all $0$-subgroups of $G$ are definably compact. If this is the case, the definable semidirect cofactors of $\mtf(G)$ are conjugate to each other.

\medskip
\item The
definable exact sequence
\[
1\ \longrightarrow\ \mtf(G)\ \longrightarrow\ G\ \longrightarrow\  G/\mtf(G)\ \longrightarrow\ 1
\]

splits, and it splits definably if and only if the sequence in $(i)$ does.
 
If this is the case, the definable semidirect cofactors of $\mtf(G)$ are conjugate to each other.
\end{enumerate}
\end{theo}

\begin{dfn} \label{linear-def}
We say that a group $G$ definable in an o-minimal structure $\M$ is \emph{linear} when there is a definable real closed field \Rs\ and a definable isomorphism between $G$ and a definable subgroup of $\GL_n(\Rs)$, for some $n \in \N$. 
\end{dfn}

From \Cref{theo:Nsplitting} we can deduce that linear groups admit a \emph{definable} decomposition in striking resemblance to the linear algebraic case:

\begin{cor} \label{cor:cor-linear}
Let $G$ be a definable linear group. Then $G$ can be decomposed as
\[
G = \mtf(G) \rtimes P,
\]

\noindent
where $P$ is a subgroup such that $P = F H$,  where $H = P^0$ is the product of a definable semisimple group $S = [H, H]$ by a definable torus $T = Z(H)^0$, and $F$ is a finite subgroup contained in $N_G(T) \cap N_G(S)$. Any other definable semidirect cofactor of $\mtf(G)$ in $G$ is a conjugate of $P$. 
\end{cor}

\medskip
Like the subgroup $N$ in \Cref{fact:real-algebraic}, the torsion-free subgroup $\mtf(G)$ is (definably) contractible and (definably) completely solvable, but it is not always nilpotent.
A natural question is whether it is possible to refine the decomposition in \Cref{cor:cor-linear} (and the one in  \Cref{theo:decomposition}), 
to find a nilpotent semidirect factor as in \Cref{fact:real-algebraic}. In Section 5 we rule out several natural nilpotent candidates (including the unipotent radical for linear groups), suggesting that \Cref{cor:cor-linear} and \Cref{theo:decomposition} are the best analogue to the Jordan-Chevalley decomposition in the o-minimal setting, for the linear and non-linear case, respectively.

  \medskip
When $G$ is not connected, the finite subgroup $F$ in \Cref{fact:real-algebraic} meets all connected components of $G$. Similarly, in Section 2 we prove that in any definable group $G$ there is always a finite subgroup intersecting every definably connected component. Finding this subgroup with very specific properties will be later essential in the proofs of \Cref{theo:decomposition} and \Cref{theo:Nsplitting}. We prove the following:
 
\begin{theo}\label{theo:disconnected}
Let $G$ be a definable group.  
\begin{itemize}
\item If $E(G) \neq 0$, there is a finite subgroup $F$, unique up to conjugation,   such that $G = G^0 \rtimes F$. 

\vs
\item If $E(G) = 0$, for every $0$-Sylow subgroup $A$ of $G$ there is a finite subgroup $F \subset N_G(A)$ such that $G = F G^0$.  
\end{itemize}
  \end{theo}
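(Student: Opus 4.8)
The plan is to split along the Euler characteristic, using the multiplicativity $E(G)=|G/G^{0}|\cdot E(G^{0})$ together with Strzebonski's theorem that a definably connected definable group is torsion-free iff its Euler characteristic is non-zero. Thus $E(G)\neq 0$ exactly when $G^{0}$ is torsion-free, while $E(G)=0$ exactly when $G^{0}$ carries torsion, equivalently when its maximal tori (the nontrivial $0$-Sylow subgroups) are present. This turns the statement into two problems: a clean splitting problem in the first case, and a ``Frattini plus lifting'' problem in the second.

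For the first bullet, write $N=G^{0}$ (normal, definable, torsion-free, of finite index) and $\Gamma=G/N$. A definable torsion-free group is solvable, and being completely solvable it admits a $G$-invariant filtration by definable subgroups whose successive quotients are torsion-free definably connected abelian groups, hence $\mathbb{Q}$-vector spaces. Since $|\Gamma|$ is invertible in $\mathbb{Q}$, every such quotient $V$, viewed as a $\mathbb{Q}[\Gamma]$-module, satisfies $H^{1}(\Gamma,V)=H^{2}(\Gamma,V)=0$. Climbing the filtration and using the vanishing of $H^{2}$ at each step shows that the extension $1\to N\to G\to\Gamma\to 1$ splits (a Schur--Zassenhaus-type conclusion), producing a finite subgroup $F\cong\Gamma$ with $G=N\rtimes F$; the parallel induction with $H^{1}=0$ shows that any two complements are conjugate. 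Note that any finite subgroup injects into $\Gamma$ because $N$ is torsion-free, so such an $F$ automatically has maximal order $|\Gamma|$.

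For the second bullet, fix a $0$-Sylow subgroup $A$ of $G$; recall these are the maximal tori, they lie in $G^{0}$, and they are $G^{0}$-conjugate. Since $G^{0}\trianglelefteq G$, the Frattini argument gives $G=G^{0}\,N_{G}(A)$, so $H:=N_{G}(A)$ already surjects onto $\Gamma=G/G^{0}$, and it remains to replace $H$ by a finite subgroup. Here I would exploit the structure of $H$: because $A$ is a maximal torus, $H^{0}=N_{G^{0}}(A)^{0}$ equals the Cartan subgroup $C_{G^{0}}(A)$ (the Weyl group $N_{G^{0}}(A)/C_{G^{0}}(A)$ being finite), and a definably connected nilpotent group is the direct product of its maximal torus and a torsion-free subgroup, whence $H^{0}=A\times U$ with $U$ torsion-free. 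Since $A\subseteq H^{0}$, the quotient $\bar H:=H/A$ has $(\bar H)^{0}\cong U$ torsion-free, i.e. $E(\bar H)\neq 0$, so the first bullet applies to $\bar H$: it has a finite complement $\bar F$ to $(\bar H)^{0}$, and $\bar F$ surjects onto $\Gamma$ because $(\bar H)^{0}\subseteq N_{G^{0}}(A)/A=\ker(\bar H\to\Gamma)$. Pulling $\bar F$ back to $H$ yields a definable subgroup $M\supseteq A$ with $M/A=\bar F$ finite and $M$ still surjecting onto $\Gamma$.

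The remaining point, and the main obstacle, is to extract from the \emph{torus-by-finite} group $M$ a genuinely finite subgroup $F$ with $FA=M$; one cannot ask for a complement, since tori do not split off finitely, which is precisely why the conclusion reads $G=FG^{0}$ rather than a semidirect product. I would argue as follows: $A$ is divisible, so $A/A_{\mathrm{tor}}$ is a $\mathbb{Q}$-vector space and, by $H^{2}=0$ again, splits off in $M/A_{\mathrm{tor}}$; pulling back a finite complement reduces us to a group $M_{0}$ with $A_{\mathrm{tor}}\trianglelefteq M_{0}$ and $M_{0}/A_{\mathrm{tor}}$ finite. Every element of $M_{0}$ then has finite order, so $M_{0}$ is (abelian torsion)-by-finite and hence locally finite; any finitely generated subgroup of $M_{0}$ surjecting onto its image in $\Gamma$ is therefore finite and serves as $F$. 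As $F\subseteq M\subseteq N_{G}(A)$ and $FG^{0}=MG^{0}=G$, this completes the second case. I expect the genuinely delicate inputs to be the structural facts about $N_{G}(A)$ (finiteness of the Weyl group and the torus $\times$ torsion-free splitting of the Cartan subgroup) and the clean handling of the torus-by-finite lifting; the cohomological splitting in the first case is routine once the uniquely-divisible filtration is in place.
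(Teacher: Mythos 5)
Your argument is correct in substance and follows the same skeleton as the paper's: the first bullet is proved exactly as in \Cref{lem:G0torsionfree} (filtration of the torsion-free $G^0$ with uniquely divisible abelian quotients, $H^2=0$ for splitting, $H^1=0$ for conjugacy), and for the second bullet you, like the paper, use Frattini to reduce to $H=N_G(A)$, pass to $H/A$ where the first bullet applies, and then lift a finite set of coset representatives back through $A$. The genuine divergence is in that last lifting step: the paper quotients by $\mtf(A)$ so as to land in a definably compact group with abelian identity component and invokes the compact-group result \Cref{rem:compact-G0abelian} (Hofmann--Morris) followed by another application of \Cref{lem:G0torsionfree}, whereas you split $A$ abstractly as $A_{\mathrm{tor}}$-by-($\Q$-vector space), kill the vector space with $H^2=0$, and finish with local finiteness of (torsion abelian)-by-finite groups. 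Your version buys a more elementary, purely group-theoretic ending that avoids the definably compact machinery entirely; the paper's version stays inside the definable category and reuses results it needs elsewhere.

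One justification you give is wrong, although the conclusion it serves is right and easy to repair. You assert that $H^{0}=N_{G^{0}}(A)^{0}$ is a Cartan subgroup, hence nilpotent, hence of the form $A\times U$ with $U$ torsion-free. Nilpotence fails in general: for $G=\SO_2(\R)\times(\R\rtimes\R^{>0})$ the subgroup $A=\SO_2(\R)\times\{e\}$ is a $0$-Sylow and $N_G(A)^{0}=G$ is not nilpotent. Moreover, even granting solvability, the structure theory (\Cref{fact:me-solvable}) only gives $N_G(A)^{0}=\mtf(N_G(A)^{0})\cdot A$ with $\mtf(N_G(A)^{0})\cap A=\mtf(A)$ possibly infinite, so a direct complement $U$ to $A$ is not available in general (recall that a $0$-Sylow is $\mtf(A)\times T$, not just a torus). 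What your argument actually needs is only that $(N_G(A)/A)^{0}$ is torsion-free, and that is immediate: $A$ is a $0$-Sylow of $N_G(A)^{0}$, so $E(N_G(A)^{0}/A)\neq 0$ by \Cref{fact:str}(5), and the definably connected quotient is then torsion-free by \Cref{fact:tricotomy}; this is precisely \Cref{lem:Anormal}. With that substitution the rest of your proof goes through as written.
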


The second part of \Cref{theo:disconnected} is reminescent of the Frattini's Argument, a fundamental tool in the study of finite groups. Indeed, the result holds in the o-minimal setting beyond finite groups and it is another important ingredient in the proof of \Cref{theo:decomposition}:

\begin{lem}[Frattini's Argument] \label{lem:frattini}
 Let $G$ be a definable group, $H$ a normal definable subgroup and $S$ a $p$-Sylow subgroup of $H$ ($p = 0$ or $p$ prime). Then $G = N_G(S) H$.
\end{lem}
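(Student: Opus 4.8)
The plan is to transplant the classical Frattini argument to the definable setting, the only non-formal input being the conjugacy of $p$-Sylow subgroups inside the normal subgroup $H$. Fix an arbitrary element $g \in G$. Since $H$ is normal in $G$, conjugation by $g$ restricts to an automorphism of $H$, and any automorphism of $H$ carries a $p$-Sylow subgroup of $H$ to a $p$-Sylow subgroup of $H$; hence $gSg\inv$ is again a $p$-Sylow subgroup of $H$. (For definable groups conjugation by $g$ is moreover a definable automorphism, so definability is preserved as well, and being a $p$-Sylow subgroup is an intrinsic property of $H$ that does not depend on the ambient $G$.)

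Next I would invoke the conjugacy of $p$-Sylow subgroups of a definable group, applied inside $H$: there is $h \in H$ with $h\,(gSg\inv)\,h\inv = S$. Setting $n = hg$, this says precisely that $n S n\inv = S$, i.e. $n \in N_G(S)$, so that $g = h\inv n \in H\,N_G(S)$. Because $H \trianglelefteq G$, the product $H\,N_G(S)$ coincides with $N_G(S)\,H$ and is a subgroup of $G$; since $g$ was arbitrary we obtain $G \subseteq N_G(S)\,H$, and as the reverse inclusion is trivial this yields $G = N_G(S)\,H$, which is the claim.

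The only genuine obstacle is the conjugacy statement used in the second step. In the classical case this is Sylow's theorem, proved by counting; here it must instead be supplied by the Sylow theory for definable groups in o-minimal structures established earlier, both for $p$ prime and --- more delicately --- for $p = 0$, where the notion of $0$-Sylow subgroup and its conjugacy are themselves substantive results rather than an elementary counting argument. Once that conjugacy is in hand the remainder of the proof is purely formal and uses definability only through the (automatic) fact that the normalizer of a definable subgroup is definable. I would therefore organize the write-up so that the conjugacy of $p$-Sylow subgroups of $H$ is cited as the single external ingredient, reducing the lemma to the two short paragraphs above.
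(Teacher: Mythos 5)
Your proof is correct and is essentially identical to the paper's: both conjugate $S$ by an arbitrary $g\in G$, note that $S^g$ is again a $p$-Sylow of $H$ because conjugation is a definable automorphism, and then invoke Strzebonski's conjugacy of $p$-Sylow subgroups (\Cref{fact:str}(4)) to produce an element of $H$ correcting $g$ into $N_G(S)$. Your identification of the conjugacy theorem as the single non-formal input matches the paper exactly.
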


As witnessed by \Cref{theo:disconnected} and \Cref{lem:frattini}, it is now apparent that  $p$-Sylow subgroups play a fundamental role in the structure of a definable group, similarly to classical $p$-Sylow subgroups in finite groups. In Section 4 the following characterization is established, later crucially used in the proof of \Cref{theo:decomposition}.

\begin{theo}\label{theo:sylow}
Let $H < G$ be definable groups and $p$ a prime number.

\begin{enumerate}

\item The following are equivalent:

\begin{enumerate}[(i)]
\item $H$ is a $p$-Sylow of $G$;  

\item $H^0$ is $0$-Sylow of $G$, $H^0 = H \cap G^0$ and $H/H^0$ is a  $p$-Sylow of $G/G^0$. 

 \end{enumerate}

\item The following are equivalent: 
\begin{enumerate}[(i)]

\item $H$ is a $0$-Sylow of $G$;  

\item $H$ is a maximal abelian definably connected subgroup definably generated by its torsion;

\item $H$ is a $p$-Sylow of $G^0$.
\end{enumerate}

\end{enumerate}
\end{theo}

 \begin{cor}
Let $p$ be a prime number and $G$ a definably connected group. A definable subgroup $H$ of $G$ is a $p$-Sylow of $G$ if and only if $H$ is a $0$-Sylow of $G$.
\end{cor}

Since $0$-groups are abelian and finite $p$-groups are nilpotent, the following holds:

\begin{cor}\label{cor:p-solvable}
Let $p$ be a prime number or $p = 0$. Definable $p$-groups are solvable and definably generated by torsion elements.
\end{cor}

Unlike finite $p$-groups, definable $p$-groups are not necessarily nilpotent, though. In \Cref{ex:inSL2C}, we provide a semialgebraic $2$-group that is not nilpotent.


\medskip
The paper is organized as follows: in Section 2 we recall Strzebonski's work on Sylow's theorems for definable groups, prove \Cref{lem:frattini} and the fundamental  decomposition of disconnected groups given
in \Cref{theo:disconnected}.

Section 3 is dedicated to the proof of the ind-definable Levi decomposition of \Cref{theo:levi}.    

In Section 4 we study $p$-groups. We show that $0$-groups are the definably connected abelian groups generated by their torsion subgroup and, for $p$ prime, definable $p$-groups are extensions of a finite $p$-group by a $0$-group (\Cref{theo:p-groups:list}). A characterization of $p$-Sylow subgroups of any definable group is then provided in the proof of \Cref{theo:sylow}.

In Section 5 the decompositions  of  \Cref{theo:decomposition} are established. Furthermore, we prove in \Cref{theo:Nsplitting} that whenever $G$ has definable semisimple Levi subgroups, $\mtf(G)$ is a semidirect factor in both $G^0$ and $G$.  The definable decomposition for linear groups in \Cref{cor:cor-linear} follows. We provide examples suggesting these decompositions cannot be improved.

\medskip
Unless otherwise stated, throughout the paper groups are definable - \emph{with parameters} - in an arbitrary o-minimal structure \M. Recall that by Edmundo's Theorem 7.2 in \cite{Edmundo}, for every definable subgroup $H$ of a definable group $G$, the set $G/H$ is definable.  

When we say that a group is \emph{definably connected} or \emph{definably compact}, we assume it is definable.

\section{Disconnected groups}

In any definable group $G$, the definably connected component of the identity   $G^0$ is the smallest definable subgroup of finite index \cite[Prop 2.12]{Pillay - groups}. Therefore any definable group is an extension of a finite group by a definably connected group. In this section we show that when $G^0$ is torsion-free the extension splits and, in general, there is a finite subgroup $F$ that meets every definably connected component, so that $G = FG^0$.  
 
Here and almost everywhere else in the paper, the o-minimal Euler's characteristic 
will be an important tool. If $\mathcal{P}$ is a cell decomposition of a definable set $X$, \emph{the o-minimal Euler characteristic} $E(X)$ is defined as the number of even-dimensional cells in 
$\mathcal{P}$ minus the number of odd-dimensional cells in $\mathcal{P}$, it does not depend on $\mathcal{P}$ and it is invariant by definable bijections  (see \cite{Lou}, Chapter 4). As points are $0$-dimensional cells, for finite sets cardinality and Euler characteristic coincide, leading Strzebonski  to prove in \cite{Strzebonski} o-minimal analogues of Sylow's theorem for finite groups.
In definable groups, in addition to $p$-Sylow subgroups, there are $0$-Sylow subgroups, that in \cite{me2} have been proven to play the same role as  maximal tori in Lie groups. 
We recall below some definitions and results we will be using.

\begin{fact}\cite[Lem 2.12]{Strzebonski} \label{fact:products} 
Let $K < H < G$ be definable groups. Then
\begin{enumerate}
\item[(a)] $E(G) = E(H)E(G/H)$

\item[(b)] $E(G/K) = E(G/H)E(H/K)$
\end{enumerate}
\end{fact}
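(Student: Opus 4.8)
The plan is to reduce both identities to a single mechanism: the multiplicativity of the Euler characteristic on direct products, combined with the existence of definable sections. Two ingredients are needed. First, the \emph{product formula} $E(X \times Y) = E(X)\,E(Y)$ for definable sets $X, Y$. This is immediate from the cell-decomposition definition of $E$: if $\mathcal{P}$ is a cell decomposition of $X$ with cells of dimensions $d_i$ and $\mathcal{Q}$ one of $Y$ with cells of dimensions $e_j$, then the products of cells form a cell decomposition of $X \times Y$, with a cell of dimension $d_i + e_j$ for each pair, so
\[
E(X \times Y) = \sum_{i,j} (-1)^{d_i + e_j} = \Bigl(\sum_i (-1)^{d_i}\Bigr)\Bigl(\sum_j (-1)^{e_j}\Bigr) = E(X)\,E(Y).
\]
Second, since we work in an o-minimal expansion of an ordered group, definable choice holds, so every definable surjection admits a definable section; in particular the canonical projection $\pi \colon G \to G/H$, $g \mapsto gH$, has a definable section $s \colon G/H \to G$ with $\pi \circ s = \id$.

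For part (a), I would use $s$ to build an explicit definable bijection. Define
\[
\Phi \colon (G/H) \times H \to G, \qquad \Phi(x, h) = s(x)\,h,
\]
which is visibly definable. It is surjective: given $g \in G$, put $x = \pi(g)$; then $s(x) \in gH$, say $s(x) = g h_0$, so $g = \Phi(x, h_0\inv)$. It is injective: if $s(x)h = s(x')h'$, applying $\pi$ and using $\pi(s(x)) = x$ yields $x = x'$, and then $h = h'$. Hence $\Phi$ is a definable bijection, and by the invariance of $E$ under definable bijections together with the product formula,
\[
E(G) = E\bigl((G/H) \times H\bigr) = E(G/H)\,E(H).
\]

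For part (b), the inclusions $K < H < G$ give a canonical definable surjection $p \colon G/K \to G/H$, $gK \mapsto gH$, well defined because $K \subseteq H$. Reusing a definable section $s \colon G/H \to G$ of $\pi$, I would set
\[
\Psi \colon (G/H) \times (H/K) \to G/K, \qquad \Psi(gH,\, hK) = s(gH)\,hK,
\]
which is well defined since $K \subseteq H$ (replacing $h$ by $hk$, $k \in K$, leaves $s(gH)\,hK$ unchanged). The same coset bookkeeping as in (a) makes $\Psi$ a bijection: applying $p$ to $\Psi(gH, hK)$ recovers $s(gH)H = gH$, so the first coordinate is determined, after which $hK$ is forced; surjectivity is checked exactly as for $\Phi$. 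Therefore
\[
E(G/K) = E\bigl((G/H) \times (H/K)\bigr) = E(G/H)\,E(H/K).
\]

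The only genuine trap is that (b) cannot simply be deduced from (a) by cancellation. Indeed, applying (a) to the three relevant quotients gives $E(K)\,E(G/K) = E(G) = E(K)\,E(H/K)\,E(G/H)$, and one would like to divide by $E(K)$ — but $E(K)$ may vanish, so this is invalid. This is precisely why I would prove (b) directly through the bijection $\Psi$ rather than algebraically from (a). Beyond this point there is no real obstacle: all the mathematical content is packaged in the product formula and in definable choice, both available in our setting, and the remainder is the routine verification that $\Phi$ and $\Psi$ are bijections.
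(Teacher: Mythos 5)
Your argument is correct. Note that the paper itself offers no proof of this statement: it is imported verbatim as \cite[Lem 2.12]{Strzebonski}, so there is nothing internal to compare against. Your reconstruction is essentially the standard one. The two ingredients you invoke are both legitimately available here: multiplicativity $E(X \times Y) = E(X)E(Y)$ is part of the basic calculus of the o-minimal Euler characteristic (van den Dries, Ch.~4), and the paper explicitly works in an o-minimal expansion of an ordered group precisely so that definable choice holds and $G/H$ is a definable set, which gives you the definable section $s$. The coset bookkeeping for $\Phi$ and $\Psi$ is routine and checks out (in particular $\Psi$ is well defined because $K \subseteq H$). Your remark that (b) cannot be obtained from (a) by cancelling $E(K)$ is a genuinely important point, since $E(K)$ vanishes for any definably connected non-torsion-free $K$. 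For comparison, the cited source derives both identities from the fibration formula $E(X) = E(Y)E(F)$ for a definable surjection whose fibers are all definably bijective to a fixed $F$; that route does not need a global section, only fiberwise bijections, but in the presence of definable choice the two arguments are interchangeable and yours is perfectly adequate.
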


\begin{dfn}\cite{Strzebonski} Let $G$ be a definable group. We say that $G$ is a \emph{$p$-group} if:
\begin{itemize}
\item $p$ is a prime number and for any proper definable $H < G$,
\[
E(G/H) \equiv 0 \quad \mod p,
\]

or 

\medskip
\item $p = 0$ and for any proper definable subgroup $H < G$,
\[
E(G/H) = 0.
\]

\end{itemize}

A maximal $p$-subgroup of a definable group $G$ is called a \emph{$p$-Sylow} of $G$.
\end{dfn}

\begin{fact}\label{fact:str} \cite{Strzebonski} Let $G$ be a definable group and $p$ a prime number or $p = 0$.
\begin{enumerate}
\item If $n$ is a prime dividing $E(G)$, then $G$ contains an element of order $n$. In particular, if $E(G) = 0$ then $G$ has elements of each prime order. Moreover, 
\[
 G \mbox{ is torsion-free } \ \Longleftrightarrow\ |E(G)| = 1.
\]

\item If $E(G) = 0$, then $G$ contains an infinite $0$-subgroup.  

\item Every $0$-group is abelian and definably connected. 

\item Each $p$-subgroup of $G$ is contained in a $p$-Sylow, and $p$-Sylows are all conjugate.   

\item If $H$ is a $p$-subgroup of $G$, then 
\[
H \mbox{ is a $p$-Sylow } \Longleftrightarrow\ E(G/H) \neq 0 \mod p
\]

\item If $A$ is a $0$-Sylow of $G$, then $E(G/N_G(A)) = 1$.

\end{enumerate}
\end{fact}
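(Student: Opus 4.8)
The plan rests throughout on one engine: the behaviour of the o-minimal Euler characteristic under definable group actions, which plays the role that orbit counting plays in finite group theory. The basic input is a fibration principle: if $f\colon A \to B$ is a definable surjection all of whose fibres have the same Euler characteristic $c$, then $E(A) = c\,E(B)$; this is the fibrewise refinement of \Cref{fact:products} and follows from additivity of $E$ over a cell decomposition adapted to $f$. Applied to a definable action of a definable group $P$ on a definable set $X$, each orbit $O_x$ is in definable bijection with the coset space $P/P_x$, so $E(O_x) = E(P/P_x)$. Splitting $X$ into its fixed-point set $X^{P}$ and the union of nontrivial orbits and using additivity, I obtain the congruence
\[
E(X) \equiv E(X^{P}) \pmod{p}
\]
when $P$ is a definable $p$-group (each nontrivial orbit has $E(P/P_x)\equiv 0 \bmod p$), and the \emph{exact} equality $E(X) = E(X^{P})$ when $P$ is a $0$-group (each nontrivial orbit has $E(P/P_x)=0$). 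Deriving this congruence cleanly --- splitting off the free part as a definable family and applying the fibration principle --- is the technical heart on which every part rests.

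Part (1) is the o-minimal Cauchy theorem. I would imitate McKay's proof: for a prime $n$, let $X=\{(g_1,\dots,g_n)\in G^n : g_1\cdots g_n = e\}$, which projects bijectively to $G^{n-1}$, so $E(X)=E(G)^{\,n-1}$. Let $\Z/n\Z$ act by cyclic permutation; its fixed set is $\{(g,\dots,g):g^n=e\}$, definably isomorphic to $\{g\in G: g^n=e\}$. The congruence above (with $P=\Z/n\Z$, orbits of $n$ points) gives $E(\{g:g^n=e\})\equiv E(G)^{n-1}\pmod n$. If $n\mid E(G)$ the right-hand side is $0$, so $\{g:g^n=e\}$ cannot equal $\{e\}$ (which would force $E=1\not\equiv 0$); any nonidentity element there has order $n$. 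The ``in particular'' is immediate since $E(G)=0$ is divisible by every prime. For the last equivalence, if $G$ has a nontrivial torsion element then some power has prime order $p$, generating $C\cong\Z/p\Z$, and $E(G)=E(C)\,E(G/C)=p\,E(G/C)$ by \Cref{fact:products}, so $|E(G)|\neq 1$; conversely a torsion-free definable group is definably homeomorphic to $M^{\dim G}$, whence $E(G)=(-1)^{\dim G}$ and $|E(G)|=1$.

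For (2) and (3) I would use the descending chain condition on definable subgroups. Since $E(G)=0$, choose a definable $H\le G$ minimal with $E(H)=0$. For any proper definable $K<H$, minimality gives $E(K)\neq 0$, and $E(K)\,E(H/K)=E(H)=0$ forces $E(H/K)=0$; thus $H$ is a $0$-group, proving (2). In (3), a $0$-group $H$ is definably connected, since otherwise $E(H/H^0)=[H:H^0]\geq 2\neq 0$ contradicts the defining property. For abelianness I would run the class equation: writing $E(H)=E(Z(H))+\sum E(H/C_H(h))$ over representatives of noncentral classes and noting each noncentral term vanishes (proper centraliser in a $0$-group), I get $E(Z(H))=E(H)=0$, so $Z(H)$ is infinite; moreover $H/Z(H)$ is again a $0$-group (proper subgroups of the quotient pull back to proper subgroups of $H$ containing $Z(H)$), so by induction on dimension $H/Z(H)$ is abelian and $H$ is nilpotent of class $\le 2$. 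Upgrading class $\le 2$ to abelian --- exploiting that the commutator then defines a biadditive map into the infinite divisible centre, and that $H$, being definably connected with torsion of all orders, forces this map to be trivial --- is the delicate final step, and together with the Sylow conjugacy below it is where the real work lies.

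Parts (4)--(6) are the Sylow package. Existence of a $p$-Sylow above a given $p$-subgroup follows from the chain condition (an ascending chain of $p$-subgroups stabilises). For (5), if $H$ is a proper $p$-subgroup of a $p$-group $H'$ then $E(H'/H)\equiv 0\pmod p$, so $E(G/H)=E(G/H')\,E(H'/H)\equiv 0$ by \Cref{fact:products}(b); hence $E(G/H)\not\equiv 0 \bmod p$ forces maximality, and the converse comes from the fixed-point congruence for the $H$-action on $G/H$. Conjugacy of $p$-Sylows (part (4)) I would obtain by letting one $p$-Sylow act on the coset space of another and comparing fixed-point contributions mod $p$; this mirrors the classical argument but is the main obstacle, since it requires the congruence to be applied to several actions at once and the fixed sets to be identified with normaliser coset spaces. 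Finally (6): let the $0$-Sylow $A$ act by conjugation on the space $G/N_G(A)$ of its conjugates. By the \emph{exact} $0$-group version of the congruence, $E(G/N_G(A))$ equals the number of $A$-fixed conjugates. A conjugate $A^g$ fixed by $A$ is normalised by $A$, so $A$ and $A^g$ are conjugate $0$-Sylows inside $\langle A,A^g\rangle$, both normal there, hence equal; the only fixed point is $A$ itself, giving $E(G/N_G(A))=1$.
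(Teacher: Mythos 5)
The paper offers no proof of this statement to compare against: it is imported verbatim as \Cref{fact:str} from Strzebonski's paper \cite{Strzebonski}, so the only question is whether your reconstruction stands on its own. Your overall engine --- replacing counting by the o-minimal Euler characteristic and deriving fixed-point congruences $E(X)\equiv E(X^{P}) \pmod p$ for definable $p$-group actions, with exact equality for $0$-groups --- is indeed the strategy of Strzebonski's original arguments, and your treatments of (1), (2), (6), the connectedness half of (3), and the ``$\Leftarrow$'' half of (5) are essentially correct, modulo routine care with definable orbit spaces (available here by definable choice).

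However, two steps you yourself flag as ``where the real work lies'' are left genuinely open, and one auxiliary claim is false. (a) The existence half of (4) cannot rest on ``an ascending chain of $p$-subgroups stabilises'': the finite cyclic subgroups of order $2^n$ in a one-dimensional definable torus such as $\SO_2(\R)$ form a strictly increasing chain of definable $2$-subgroups, so there is no ACC. What is actually needed (and what Strzebonski proves) is that the definable subgroup generated by a chain of $p$-subgroups is again a $p$-subgroup, so that Zorn's lemma applies. (b) For abelianness of $0$-groups, your class-equation induction correctly reduces to nilpotency of class $\leq 2$, but the promised final step --- that the biadditive commutator pairing $H/Z(H)\times H/Z(H)\to Z(H)$ vanishes --- is asserted, not argued, and it is not a formality. (c) In the ``$\Rightarrow$'' half of (5) the fixed-point congruence only yields $E(G/H)\equiv E(N_G(H)/H)\pmod p$; to finish you must still show $E(N_G(H)/H)\not\equiv 0$, which requires Cauchy's theorem in the quotient together with the lemma that a definable extension of a finite $p$-group by a $p$-group is a $p$-group, in order to contradict maximality of $H$. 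Until (a)--(c) are supplied, part (3) and the Sylow package (4)--(5) are not established.
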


\noindent
As a natural application of Strzebonski's work, we now show how Frattini's Argument applies to definable groups as well.

\begin{proof} [\textbf{Proof of \Cref{lem:frattini}}]
For $g \in G$, let $S^g$ denote the conjugate of $S$ by $g$. Then $S^g$ is a $p$-Sylow of $H$ for any $g \in G$, because the conjugation map $H \to H$, $x \mapsto gxg^{-1}$ is a definable automorphism of $H$, and the Euler characteristic is invariant by definable bijections. Since $p$-Sylow subgroups are all conjugate, there is $x \in H$ such that $S^g = S^x$ and $gx^{-1} \in N_G(S)$, the normalizer of $S$ in $G$. Therefore $G = N_G(S)H$.
\end{proof}

As observed in the introduction of \cite{PPSI}:

\begin{fact} \label{fact:tricotomy}
If $G$ is definably connected, then either $E(G) = \pm 1$ (iff $G$ is torsion-free) or $E(G) = 0$.
\end{fact}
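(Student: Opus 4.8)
The plan is to split along the stated dichotomy and reduce everything to the multiplicativity of the Euler characteristic. First I would record that the two alternatives are mutually exclusive (since $\pm1\neq0$) and that the equivalence ``$E(G)=\pm1 \iff G$ torsion-free'' is already contained in \Cref{fact:str}(1), which asserts precisely that $G$ is torsion-free iff $|E(G)|=1$. Thus the only implication left to prove is: if $G$ is definably connected and \emph{not} torsion-free, then $E(G)=0$.

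For this, the key observation is that it suffices to produce a \emph{single} infinite definable subgroup $H\leq G$ with $E(H)=0$. Indeed, by \Cref{fact:products}(a) applied to $H<G$ one has $E(G)=E(H)\,E(G/H)$, so $E(H)=0$ forces $E(G)=0$ regardless of the quotient. All the remaining work is therefore concentrated in exhibiting such an $H$.

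To produce it I would use the hypothesis that $G$ carries a torsion element together with the o-minimal structure theory of low-dimensional groups: a definably connected group that is not torsion-free contains a definable, one-dimensional, definably compact subgroup $K$, definably isomorphic to $\SO_2(\R)$ over a definable real closed field (this is the Peterzil--Steinhorn analysis of torsion and one-parameter subgroups). Such a $K$ is a definable ``circle'', and its Euler characteristic is read off directly from any cell decomposition---one $0$-cell (a point) together with the complementary open arc as a single $1$-cell---giving $E(K)=1-1=0$. Taking $H=K$ in the previous step yields $E(G)=0$, as desired.

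The main obstacle is exactly the production of $K$: passing from the mere existence of a torsion element to an honest positive-dimensional definably compact subgroup with vanishing Euler characteristic is the one step that is not pure Euler-characteristic bookkeeping and that genuinely uses the o-minimal geometry. As a self-contained cross-check reinforcing the whole picture, I would note that for any $g\in G$ of finite order $k$, applying \Cref{fact:products}(a) to the finite subgroup $\langle g\rangle$ gives $E(G)=k\,E(G/\langle g\rangle)$, so $k\mid E(G)$; since the circle $K$ contains elements of every order, $E(G)$ is divisible by arbitrarily large integers and must vanish. This recovers the conclusion (and re-proves the prime statement in \Cref{fact:str}(1)) once the existence of $K$ is granted.
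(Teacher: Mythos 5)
The paper does not actually prove this statement---it is quoted from the introduction of \cite{PPSI}---so the comparison is with the standard argument there. Your reduction is fine: by \Cref{fact:str}(1) the equivalence with torsion-freeness is already given, and by \Cref{fact:products}(a) it suffices to exhibit one definable subgroup $H\leq G$ with $E(H)=0$. The problem is the step that produces $H$. The claim that a definably connected group with torsion contains a one-dimensional definably compact subgroup, definably isomorphic to $\SO_2$ of a definable real closed field, is false, and the paper itself points to a counterexample: Example 5.3 of \cite{Strzebonski} (quoted at the start of Section 3) is a semialgebraic $0$-group---hence definably connected, with finite subgroups of arbitrarily large order---\emph{all} of whose definably compact definable subgroups are finite. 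So no infinite definably compact $K$ exists there, let alone a definable circle. The form of the claim is also wrong for another reason: in a linear o-minimal structure there is no definable real closed field at all, yet definably connected groups with torsion exist (e.g.\ an interval with addition modulo an endpoint). Finally, the attribution is off: the Peterzil--Steinhorn one-parameter subgroup theorem runs in the opposite direction, extracting one-dimensional \emph{torsion-free} subgroups from failure of definable compactness; it does not convert torsion into compact circles.

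Your closing ``cross-check'' is actually much closer to a correct proof than the main argument, but as written it still leans on the nonexistent $K$ to supply elements of every order. The workable version is: any element $g$ of finite order $k$ gives $k\mid E(G)$ via \Cref{fact:products}(a) applied to the finite subgroup $\langle g\rangle$, so it suffices to produce torsion elements of arbitrarily large order. In the abelian definably connected case this follows from divisibility (\Cref{fact:divisibility}(1)): a $k$-torsion element has a $k$-th root, whose order strictly exceeds $k$, and one iterates. The genuine content, which your proposal does not supply, is localizing a torsion element of a general definably connected $G$ inside an infinite definably connected abelian definable subgroup (or otherwise inside an infinite definable subgroup of zero Euler characteristic). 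Note that you cannot borrow the paper's own machinery for this ($p$-Sylow structure, $0$-groups, \Cref{prop:p-groups}), since those results are developed later and their proofs already invoke \Cref{fact:tricotomy}.
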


\begin{fact}  \cite[Cor 2.4, Claim 2.11]{PeSta} \cite[Prop 4.1]{Strzebonski} \label{fact:divisibility}
 \begin{enumerate} 
 
 \item Abelian definably connected groups are divisible.
 
 \item Torsion-free definable groups are solvable, definably connected and uniquely divisible.
 \end{enumerate}
 \end{fact}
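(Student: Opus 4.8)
The plan is to treat the two parts separately, using the o-minimal Euler characteristic as the main lever.

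For (1), let $G$ be abelian and definably connected and fix an integer $n \geq 1$. Writing $G$ additively, multiplication by $n$ is a definable endomorphism whose image $nG$ is a definable subgroup, so the quotient $G/nG$ is a definably connected group of exponent dividing $n$. I would then argue that such a quotient must be trivial: by the tricotomy of \Cref{fact:tricotomy} either $E(G/nG) = \pm 1$, forcing $G/nG$ torsion-free and hence, being of finite exponent, trivial; or $E(G/nG) = 0$, which by \Cref{fact:str}(1) would produce elements of every prime order and contradict the bounded exponent. Hence $nG = G$ for all $n$, i.e. $G$ is divisible.

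For (2), definable connectedness comes first and is immediate: if $G$ is torsion-free then $|E(G)| = 1$ by \Cref{fact:str}(1), while $E(G) = E(G^0)\,|G/G^0|$ by \Cref{fact:products}(a) with $|E(G^0)| = 1$, so $|G/G^0| = 1$. For solvability I would pass to the solvable radical $R$ of $G$ and consider the semisimple quotient $G/R$. Using that a nontrivial semisimple definably connected group carries torsion, and hence has $E = 0$ by the tricotomy, multiplicativity of $E$ gives $E(G) = E(R)\,E(G/R) = 0$, contradicting $|E(G)| = 1$; thus $G/R$ is trivial and $G = R$ is solvable.

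Finally, for unique divisibility I would induct on the derived length. Torsion-freeness already gives injectivity of $n$-th roots inside any abelian definably connected piece, and part (1) gives their existence there. Taking $A$ to be the last nontrivial term of the derived series, $A$ is torsion-free abelian and definably connected, hence uniquely divisible, and $G/A$ is torsion-free solvable of smaller derived length, uniquely divisible by induction. Given $g \in G$, I would lift the unique $n$-th root of its image in $G/A$ to some $h_0$, so that $h_0^{\,n} = g a$ for some $a \in A$, and then correct $h_0$ by an element of $A$. The main obstacle is precisely this correction: since $A$ need not be central, adjusting $h_0$ by $a' \in A$ changes $h_0^{\,n}$ not by $(a')^n$ but by a twisted product $\prod_{i=0}^{n-1} h_0^{-i}\, a'\, h_0^{\,i}$ built from the conjugation action of $h_0$ on $A$. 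I would need to show the resulting additive map $A \to A$ is a bijection, by viewing $A$ as a uniquely divisible module over the integral group ring and checking that the relevant operator is invertible; verifying this bijectivity — surjectivity for existence of roots and injectivity for their uniqueness — in the non-abelian solvable situation is where the real work lies.
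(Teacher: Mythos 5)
This statement appears in the paper as a quoted Fact, with the proofs delegated to Peterzil--Starchenko and Strzebonski; the paper gives no argument of its own, so your proposal stands or falls on its own. Part (1) is correct and is essentially the standard argument: $G/nG$ is a definably connected group of exponent dividing $n$, and both branches of \Cref{fact:tricotomy} (torsion-free, or $E=0$ and hence elements of every prime order by \Cref{fact:str}(1)) force it to be trivial. The definable-connectedness claim in part (2) via multiplicativity of the Euler characteristic is also fine. The other two claims have genuine gaps.

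For solvability you invoke, without justification, that a nontrivial semisimple definably connected group carries torsion. This cannot be extracted from the trichotomy alone: the trichotomy only says such a group is torsion-free or has $E=0$, and excluding the torsion-free case is exactly the statement you are proving --- indeed the paper itself deduces ``infinite semisimple $\Rightarrow E=0$'' \emph{from} \Cref{fact:divisibility} in the proof of \Cref{lem:Anormal}, so quoting it here is circular. A non-circular version needs the Peterzil--Pillay--Starchenko classification of definably simple groups, a much heavier input than the dimension induction used in the cited source. For unique divisibility, the step you yourself flag as ``where the real work lies'' is not a verification one can carry out abstractly: the operator $1+t+\cdots+t^{n-1}$ on $A$, with $t$ the conjugation by $h_0$, is \emph{not} invertible for general torsion-free solvable Lie groups. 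In $\R^2\rtimes\R$ with $\R$ acting by rotations (the universal cover of the Euclidean motion group, which is torsion-free and metabelian), every element $(w,\pi)$ squares to $(0,2\pi)$, so $(v,2\pi)$ with $v\neq 0$ has no square root; here $1+t$ kills all of $\R^2$ when $t$ is rotation by $\pi$. So the bijectivity you need is a genuinely o-minimal phenomenon --- the cited proof obtains it by producing a complete flag of normal definable subgroups with one-dimensional torsion-free quotients, on which the conjugation action is tame --- and your plan, as written, does not close.
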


\begin{fact}\label{fact:me-solvable} \cite[Prop 2.1]{CPI} \cite[Prop 3.1]{me2} 
\begin{enumerate}

\item Every definable group $G$ has a maximal normal definable torsion-free subgroup $\mtf(G)$.

\item If $G$ is solvable and definably connected, then

\begin{itemize}
\item $G/\mathcal{N}(G)$ is a definable torus (that is, abelian, definably connected and definably compact).

\item For any $0$-Sylow $A$ of $G$, $G = \mathcal{N}(G)A = \mathcal{N}(G) \rtimes T$, where $T$ is any direct complement of $\mathcal{N}(A)$ in $A$. %
\end{itemize}

\noindent
In particular, if $G$ is abelian, then $A$ is unique and $G = \mathcal{N}(G) \times T$.  
\end{enumerate}
\end{fact}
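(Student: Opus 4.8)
The plan is to analyse a $0$-Sylow $A$ of $G$ first, then reduce both bullets to the single assertion that $G/\mathcal{N}(G)$ is a definable torus, and finally assemble the semidirect product. By \Cref{fact:str}(3) a $0$-Sylow $A$ is abelian and definably connected, hence divisible (\Cref{fact:divisibility}), so it splits as a direct product $A=\mathcal{N}(A)\times T$ with $\mathcal{N}(A)$ its torsion-free part and $T$ its maximal definably compact subgroup, a definable torus; this $T$ is the required complement of $\mathcal{N}(A)$. By \Cref{fact:NAinR} the group $\mathcal{N}(A)$ is central in $G$, so it is a normal definable torsion-free subgroup and therefore $\mathcal{N}(A)\subseteq\mathcal{N}(G)$. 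Since $\mathcal{N}(G)\cap A$ is a torsion-free definable (hence normal, as $A$ is abelian) subgroup of $A$, maximality gives $\mathcal{N}(G)\cap A=\mathcal{N}(A)$.

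Next I would show that everything follows from the key claim $(\ast)$: $\bar{G}:=G/\mathcal{N}(G)$ is a definable torus. First, $\mathcal{N}(\bar{G})=\{e\}$, because the preimage of any normal definable torsion-free subgroup of $\bar{G}$ is an extension of a torsion-free group by the torsion-free group $\mathcal{N}(G)$, hence torsion-free by the criterion $|E|=1$ of \Cref{fact:str}(1) with \Cref{fact:products}(a), and so is absorbed into $\mathcal{N}(G)$. Granting $(\ast)$, the image of $A$ in the torus $\bar{G}$ is a definably connected subgroup of dimension $\dim A-\dim\mathcal{N}(A)=\dim T$, and a maximality argument for $0$-Sylows forces it to be all of $\bar{G}$; together with $\mathcal{N}(G)\cap A=\mathcal{N}(A)$ this yields $G=\mathcal{N}(G)A$. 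Using $\mathcal{N}(A)\subseteq\mathcal{N}(G)$ and $A=\mathcal{N}(A)T$ this becomes $G=\mathcal{N}(G)T$; moreover $\mathcal{N}(G)\cap T$ is a torsion-free definable subgroup of the torus $T$, so its identity component, if nontrivial, would be definably compact of positive dimension ($E=0$) and torsion-free ($|E|=1$) at once, forcing $\mathcal{N}(G)\cap T$ finite and hence trivial. As $\mathcal{N}(G)$ is normal we conclude $G=\mathcal{N}(G)\rtimes T$ and $\bar{G}\cong T$, which is the first bullet. In the abelian case $0$-Sylows are conjugate (\Cref{fact:str}(4)) hence unique, $\mathcal{N}(A)=\mathcal{N}(G)\cap A$ is trivial, $T=A$, and the semidirect product is direct.

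It remains to prove $(\ast)$, equivalently that a solvable definably connected $H$ with $\mathcal{N}(H)=\{e\}$ is a definable torus; this is the heart of the matter and I would argue by induction on $\dim H$. The commutator $C=[H,H]$ is definable and definably connected (\cite[Theo 1.3]{BJO}) of strictly smaller dimension; its torsion-free radical $\mathcal{N}(C)$ is characteristic in $C$, hence normal in $H$, hence trivial, so $C$ is a torus by induction. A similar characteristic/normal argument, applied to the preimage of $\mathcal{N}(H/C)$, shows $\mathcal{N}(H/C)=\{e\}$, so the abelian group $H/C$ is a torus as well; thus $H$ is an extension of a torus by a torus, and in particular definably compact.

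The main obstacle is the last step: passing from ``$H$ is an extension of a torus by a torus'' to ``$H$ is abelian''. Here the decisive input is the rigidity of definable tori, which has no analogue for general Lie groups: a definably connected group admits no nontrivial definable action on a definable torus, and there is no nontrivial definable alternating bihomomorphism between definable tori. The first fact forces $C$ to be central in $H$, and the second forces the commutator form $H/C\times H/C\to C$ classifying the central extension to vanish, so $H$ is abelian and hence a torus. It is exactly this rigidity that rules out definable versions of the Heisenberg-modulo-$\mathbb{Z}$ example mentioned in the introduction, where the quotient by a non-definable central $\mathbb{Z}$ produces a non-compact solvable group with trivial torsion-free radical.
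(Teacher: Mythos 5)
The paper offers no proof of this statement: it is imported verbatim as \cite[Prop 3.1]{me2}, so there is nothing internal to compare against and your attempt has to stand on its own. The reduction you set up is sound in outline: identifying $\mathcal{N}(A)=\mathcal{N}(G)\cap A$, reducing both bullets to the claim $(\ast)$ that $G/\mathcal{N}(G)$ is a definable torus, and recovering $G=\mathcal{N}(G)\rtimes T$ from $(\ast)$ via the $0$-Sylow argument in the quotient all work. Two incidental slips: the complement $T$ of $\mathcal{N}(A)$ in $A$ is in general only an \emph{abstract} torus, not ``its maximal definably compact subgroup, a definable torus'' --- Strzebonski's Example 5.3, quoted in Section 3, is a $0$-group whose definably compact subgroups are all finite; and when $G$ is abelian $\mathcal{N}(A)$ need not be trivial (take $G$ equal to that same $0$-group), although directness of the product is immediate there since everything commutes.

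The genuine gap is in the proof of $(\ast)$. Your induction on $\dim H$ reduces a non-abelian $H$ to $C=[H,H]$ and $H/C$, but when $H$ is abelian the commutator subgroup is trivial, neither quotient drops in dimension, and the induction never touches the real content: that an abelian definably connected group with $\mathcal{N}(H)=\{e\}$ is definably compact. This is not formal. The standard input is the Peterzil--Steinhorn criterion that a definably connected group fails to be definably compact exactly when it contains a one-dimensional torsion-free definable subgroup (which, in an abelian $H$, would be normal and hence inside $\mathcal{N}(H)$); alternatively one can extract what is needed from the decomposition $G^0=PH$ in \Cref{fact:NAinR}, which you quote only for the centrality of $\mathcal{N}(A)$. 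Neither appears in your argument, and this base case is precisely where the theorem lives. The two ``rigidity'' statements you lean on at the end (no nontrivial definable action of a definably connected group on a definable torus, no nontrivial definable alternating bihomomorphism into a torus) are true and provable --- a definable torus is generated, as a definable group, by its torsion, and each $n$-torsion subgroup is finite and hence fixed pointwise by a definably connected group --- but as written they are assumptions rather than proofs, so that step needs to be fleshed out as well.
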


\begin{rem}\label{rem:compact-G0abelian}
Let $G$ be a definably compact group such that $G^0$ is abelian. Then there is a finite subgroup $F$ of $G$ such that $G = FG^0$. 
 \end{rem}

\begin{proof}
Same proof as for compact Lie groups applies. See, for instance the proof of Theorem 6.10(i) in \cite{HM}. 
\end{proof}

\begin{prop} \label{prop:G0torsionfree}
 Let $G$ be a definable group such that $G^0$ is torsion-free. Then $G = G^0 \rtimes F$, for some finite subgroup $F$, unique up to conjugation.
 \end{prop}

\begin{proof}
We will prove our claim by induction on $d = \dim G^0$.

Suppose first $G^0$ is abelian and set $\overline{G} = G/G^0$. Then $G^0$ is a $\overline{G}$-module. As $G^0$ is uniquely divisible (\Cref{fact:divisibility}), then $H^n(\overline{G}, G^0) = 0$ for all $n>0$ (see, for instance, \cite[11.3.8]{robinson}). Taking $n=2$, we can find some finite subgroup $F$ that is a complement of $G^0$. Taking $n=1$, we can deduce that $F$ is unique up to conjugation.

If $d = 1$, $G^0$ is abelian. So assume $d > 1$ and $G^0$ is not abelian.  Let $N$ be the commutator subgroup of $G^0$. As $G^0$ is solvable (\Cref{fact:divisibility}), $N$ is definable by \cite[Theo 1.3]{BJO} and 
$\dim N < \dim G^0$, because $G^0$ is definably connected. By \Cref{fact:str}(1) and \Cref{fact:products}(a), any definable subgroup and definable quotient of a definable torsion-free group is torsion-free as well, so $(G/N)^0 = G^0/N$ is torsion-free   and $G/N = G^0/N \rtimes F_1$ by the abelian case. Let $G_1$ be the pre-image in $G$ of $F_1$. By induction hypothesis, $G_1 = N \rtimes F$, for some finite subgroup $F$, unique up to conjugation. Since $G_1$ maps surjectively onto $G/N$, $G = NG_1$ and $G = G^0 \rtimes F$.

Suppose $H$ is another semidirect cofactor of $G^0$ in $G$ and set $H_1 = N \rtimes H$. We want to show that $H$ is a conjugate of $F$. Let $\pi \colon G \to G/N$ be the natural homomorphism and set $\bar H = \pi(H)$. By the abelian case, $\bar H$ is a conjugate of $F_1$, say $\bar H = F_1^x$. Let $g \in \pi\inv(x)$. Then $F^g \subset H_1 = \pi\inv(\bar H)$. By induction hypothesis, $F^g$ is a conjugate of $H$ in $H_1$ and so $H$ is a conjugate of $F$ in $G$, as claimed.
\end{proof}

\begin{prop} \label{prop:Anormal}
Let $A$ be a $0$-Sylow of a definable group $G$ and denote by $N_G(A)$ its normalizer in $G$. Then $N_G(A)^0$ is solvable, $(N_G(A)/A)^0$ is torsion-free, and
 \[
N_G(A)/A = (\mtf(N_G(A))/\mtf(A)) \rtimes N_G(A)/N_G(A)^0. 
\] 
\end{prop}

\begin{proof}
For ease of notation, set $H = N_G(A)$. Let $R$ be the solvable radical of $H$.
As $A$ is abelian and definably connected, then $A \subseteq R$. If $H^0$ is not solvable, then $H^0/R$ is an infinite semisimple definable group and $E(H^0/R) = 0$ (\Cref{fact:tricotomy}). However, 
\[
E(H^0/A) = E(H^0/R)E(R/A) \neq 0
\]
by \Cref{fact:str}(5), contradiction. It follows that $H^0$ is solvable.

Note that $\mtf(H) = \mtf(H^0)$, since torsion-free definable groups are definably connected (\Cref{fact:divisibility}). Moreover, we claim that $\mtf(H) \cap A = \mtf(A)$.

Since $\mtf(H)$ is torsion-free and $A$ is abelian, clearly $\mtf(H) \cap A \subseteq \mtf(A)$. If $\mtf(H) \cap A \neq \mtf(A)$, then the quotient $\mtf(A)/(\mtf(H) \cap A)$ is an infinite torsion-free definable subgroup of $A/(\mtf(H) \cap A)$. However, because $H^0$ is solvable, $H^0 = \mtf(H) A$ by \Cref{fact:me-solvable}, so $A/(\mtf(H) \cap A)$ is definably isomorphic to the definably compact $H^0/\mtf(H)$, contradiction.

Hence $(H/A)^0 = H^0/A = \mtf(H)/\mtf(A)$. Moreover, the group $H/A$ is 
a definable extension of the finite group 
$(H/A)/(H^0/A) = H/H^0$ by a torsion-free definable group. By   \Cref{prop:G0torsionfree}, the extension splits.
\end{proof}

\begin{prop}\label{prop:disconnected-E0}
Let $G$ be a definable group. For every $0$-Sylow $A$ of $G$ there is a finite subgroup $F \subset N_G(A)$ such that $G = F G^0$.
 \end{prop}

\begin{proof}
If $E(G) \neq 0$, then $A = \{e\}$ is the unique $0$-Sylow of $G = N_G(A)$, $G^0$ is torsion-free and \Cref{prop:G0torsionfree} applies.

Let $E(G) = 0$.
 Suppose first $A$ is normal in $G$. That is, $G = N_G(A)$. By \Cref{prop:Anormal},
\[
G/A = (G/A)^0 \rtimes F'
\] 

\vs \noindent
for some finite $F' < G/A$, unique up to conjugation. Let $K'$ be the pre-image of $F'$ in $G$. Note that $\mtf(A)$ is a definably characteristic subgroup of $A$, hence normal in $G$. The quotient $K = K'/\mtf(A)$ is definably compact because $K'/A$ and $A/ \mtf(A)$ are definably compact (\Cref{fact:me-solvable}). Set $A_1 = A/\mtf(A)$.
By \Cref{rem:compact-G0abelian}, $K = F_1 A_1$, for some finite $F_1 < K$, because $A_1 = K^0$ is abelian. Let $H$ be the pre-image of $F_1$ in $G$. By \Cref{prop:G0torsionfree}, there is some finite subgroup $F$ such that $H = \mtf(A) \rtimes F$. Therefore, $G = FG^0$ because $G = HA$.

Suppose now $N_G(A) \neq G$. Because $A$ is definably connected, $A \subseteq G^0$. By \Cref{lem:frattini}, $G = N_G(A)G^0$. Let $F \subset N_G(A)$ be a finite subgroup from above such that $N_G(A) = F N_G(A)^0$. Since $N_G(A)^0 \subseteq G^0$, it follows that $G = FG^0$. 
 \end{proof}

\begin{proof} [\textbf{Proof of \Cref{theo:disconnected}}]
 If $E(G) \neq 0$, then $G^0$ is torsion-free by \Cref{fact:tricotomy}. In this case, a semidirect complement of $G^0$, unique up to conjugation, is found in \Cref{prop:G0torsionfree}. 

If $E(G) = 0$, then \Cref{prop:disconnected-E0} provides a finite subgroup $F \subset N_G(A)$ that meets all definably connected components of $G$. 
 \end{proof}

\begin{rem}
When $E(G) = 0$, it is not always possible to find a semidirect cofactor of $G^0$, unlike the case where $G^0$ is torsion-free. An example is given below.
\end{rem} 

\begin{ex} \label{ex:inSL2C} Let $\Rs$ be a real closed field and $\Ks$ its algebraic closure. Consider the definable (in \Rs) subgroup $G$ of $\SL_2(\Ks)$ of matrices of the form
\[
\begin{pmatrix}
z & 0 \\
0 & z\inv
\end{pmatrix}, 
\begin{pmatrix}
0 & -z\inv \\
z & 0
\end{pmatrix}
\]
where $z \in \Ks$, $|z| = 1$. Then $G$ is an extension of $\Z/2\Z$ by a $1$-dimensional definable torus. The unique $0$-Sylow subgroup of $G$ is $G^0$, corresponding to the diagonal matrices above on the left. Each anti-diagonal matrix on the right has order $4$ and the subgroup generated by any of them meets both definably connected components of $G$. $G^0$ is not a semidirect factor of $G$, as all elements of order $2$ are contained in $G^0$.
 \end{ex}


\section{A Levi decomposition}

The semialgebraic group $G$ from \Cref{ex:NcapS-infinite} has a decomposition $G = RS$, where $R = \R \times \{e\}$ is the solvable radical, and $S = \Z \times \SL_2(\R) \cong \widetilde{\SL}_2(\R)$ is a maximal semisimple group. The Lie algebra of $S$ is semisimple, and a maximal semisimple Lie subalgebra of the Lie algebra of $G$.  Moreover, $S$ is the smallest subgroup of $G$ on which the canonical homomorphism $G \to G/R$ is surjective. The subgroup $S$ cannot be defined in any o-minimal structure over $\R$, as its center is an infinite cyclic group. However, $S$ is a countable directed union of semialgebraic sets, it is \emph{ind-definable} and \emph{locally definably connected}. In this section we discuss such decomposition in arbitrary o-minimal structures.

The expressions ind-definable, $\bigvee$-definable, and locally definable are more or
less synonymous, and refer to definability by a possibly infinite disjunction of
first order formulas. In the context of groups in o-minimal structures these notions (and related connectedness) appear for instance in  \cite{BEP19, BEM13, E06, HPP1, Peterzil-Starchenko1}.

We refer to \cite[Section 2]{CP-Levi} for the precise notion of an ind-definable  group. For our purposes, it will be enough to take a countable directed union
of definable sets $G = \bigcup_{n \in \N}X_n$ such that the group operation restricted to each $X_i \times X_j$ is a definable map with image in some $X_k$.

\begin{dfn}\label{dfn:locally-def-connected}
Let $G$ be an ind-definable group. We say that $G$ is \emph{locally definably connected} if $G$ has no proper subgroup $H$ with the properties that for each definable subset $Z$ of $G$, $Z \cap H$ is definable and $Z$ meets only finitely many distinct cosets of $H$ in $G$.
\end{dfn}

\begin{dfn}\cite[Def 2.3 \& 2.5]{CP-Levi}  \label{dfn:semisimple}
\begin{enumerate}[(i)]

\item Let $X$ be an ind-definable set and $Y$ a subset of $X$. We say that
$Y$ is \emph{discrete} if for any definable subset $Z$ of $X$, $Z \cap Y$ is finite.

\medskip
\item We call $G$ \emph{ind-definable semisimple} if $G$ is ind-definable, definably connected, and a central extension of a definable semisimple group by a discrete group. 

\end{enumerate}
\end{dfn}  

\begin{fact}\cite[Theorem 1.1 \& 1.2]{CP-Levi} \label{fact:CP-levi}
Let $G$ be a definably connected group definable in an o-minimal expansion of a field and let $\mathfrak{g}$ be its Lie algebra.  

\begin{enumerate}

\item $G$ has a maximal ind-definable semisimple subgroup $S$, unique up to conjugation.
Moreover,
\[
G = RS,
\]

\noindent
where $R$ is the solvable radical of $G$, and $Z(S)$ is finitely generated and contains $R \cap S$. We call any such $S$ and \emph{ind-definable semisimple Levi subgroup} of $G$.

\medskip
\item For any semisimple Lie subalgebra $\mathfrak{s}$ of $\mathfrak{g}$ there
is a unique ind-definable semisimple subgroup $S$ of $G$ whose Lie algebra is $\mathfrak{s}$. 
\end{enumerate} 
\end{fact}

Given a group $G$ - definable or not - we denote by $[G, G]_1$ the set of its commutators, by $[G, G]_r$ the set of products of at most $r$ commutators, and by $[G, G] = \langle [G, G]_1 \rangle = \bigcup_{n \in \N} [G, G]_n$ the commutator subgroup of $G$. Recall that a group is called \emph{perfect} when $G = [G, G]$.

\begin{fact}\cite[3.1]{HPP} \label{fact:finite-width}
Let $G$ be a semisimple definable group. Then $G$ is perfect and it has finite commutator width. That is, $G = [G, G]_r$, for some $r \in \N$.
\end{fact}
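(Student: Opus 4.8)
The statement is cited from \cite[3.1]{HPP}, so this is a plan for how one would prove it. The plan is to reduce to the definably simple case through the structure theory of semisimple definable groups, import the finite-width statement there from the theory of algebraic groups over real closed fields, and then reassemble. I would first record that, once \emph{finite width} is known --- say $[G,G]_r=[G,G]$ for some $r$ --- perfectness is essentially free: for a semisimple (hence definably connected) $G$, a definable subgroup of finite index must be all of $G$, since $G^0=G$ is the smallest definable subgroup of finite index. So it suffices to show that $[G,G]_r$ is a subgroup of finite index for some $r$, whence $[G,G]=[G,G]_r=G$. Thus the real content is the finite-width assertion.

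By the structure theory of semisimple definable groups, $Z(G)$ is finite and $G/Z(G)$ is a direct product $\prod_i G_i$ of definably simple centreless groups; and by the Peterzil--Pillay--Starchenko analysis each $G_i$ is definably isomorphic to a simple algebraic group over a definable real closed field. On such an algebraic factor the finite-width statement is the classical fact that a connected semisimple algebraic group is generated by its unipotent elements and that every element is a product of a bounded number of commutators (over an algebraically closed field this is Ree's theorem, giving width $1$; over a real closed field one argues through the Bruhat decomposition and the fact that one-parameter unipotent subgroups lie in the commutators). Taking the direct product only adds the widths, so $G/Z(G)=[\,G/Z(G),\,G/Z(G)\,]_{r_1}$ for some finite $r_1$.

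It then remains to lift through the finite central extension $1\to Z(G)\to G\to G/Z(G)\to 1$. Since $G/Z(G)$ has width $r_1$, the definable set $X:=[G,G]_{r_1}$ surjects onto $G/Z(G)$, that is $X\cdot Z(G)=G$; as $Z(G)=\{z_1,\dots,z_k\}$ is finite this is a finite union $G=\bigcup_j Xz_j$, so $X$ is a full-dimensional definable subset of $G$. To upgrade this to $G=[G,G]_r$ one must show that each central $z_j$ is itself a bounded product of commutators in $G$ (equivalently that the central extension is perfect); granting this, $G=X\cdot Z(G)\subseteq[G,G]_{r_1+s}=:[G,G]_r$, which simultaneously yields perfectness and finite commutator width.

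The main obstacle is the finite-width input at the definably simple level, and it is a genuine one: the naive ``soft'' argument --- that a full-dimensional definable symmetric $X\ni e$ generates $G$ in boundedly many steps --- is \emph{false} for non-compact groups, as the additive group $(\R,+)$ with $X=(-1,1)$ already shows ($X^m=(-m,m)\neq\R$). One therefore cannot avoid the algebraic-geometric content: over an algebraically closed field it is the constructibility of the sets $[G,G]_r$ together with Noetherianity of Zariski closures and dominance of the commutator map, and over a real closed field one transfers this via semialgebraic constructibility, exactly the package recorded in \cite[3.1]{HPP}. A secondary technical point is the perfectness of the finite central extension, needed to absorb $Z(G)$ into the commutators with bounded width.
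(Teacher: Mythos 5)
This statement is imported by the paper verbatim from \cite[3.1]{HPP}; the paper contains no proof of it, so there is nothing internal to compare your argument against. Judged on its own terms, your plan is sound and follows the standard route one would expect (and essentially the route of \cite{HPP}): $Z(G)$ is finite because $Z(G)^0$ is a solvable definably connected normal definable subgroup of a semisimple group; $G/Z(G)$ decomposes as a direct product of definably simple groups by \cite[Theo 4.1]{PPSI}, each definably isomorphic to a semialgebraic group over a definable real closed field; finite commutator width for these factors is the genuinely non-soft input, and you are right to flag that the ``full-dimensional set generates in boundedly many steps'' heuristic fails outside the compact case. (The cleanest way to obtain that input is transfer: every element of a connected semisimple real Lie group is a single commutator by Goto's theorem, this is a first-order semialgebraic statement with a fixed bound, and Tarski transfer carries it to the $\Rs$-points; this avoids redoing Bruhat-type arguments over $\Rs$.)

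One remark on your final step: the ``secondary technical point'' you defer --- that each central element is a bounded product of commutators --- is not actually an extra input; it falls out of the machinery you already set up. From $G = X\cdot Z(G)$ with $X=[G,G]_{r_1}$ and $Z(G)$ finite, any $g\in[G,G]$ factors as $g=xz$ with $x\in X\subseteq[G,G]$, hence $z\in Z(G)\cap[G,G]$; so $[G,G]=X\cdot\bigl(Z(G)\cap[G,G]\bigr)$ is a finite union of translates of $X$, and since each of the finitely many elements of $Z(G)\cap[G,G]$ is some finite product of commutators, $[G,G]=[G,G]_{r_1+s}$ for some $s$. This exhibits $[G,G]$ as a definable subgroup of index dividing $|Z(G)|$, and your own opening observation ($G=G^0$ has no proper definable subgroup of finite index) then gives $G=[G,G]_{r_1+s}$, which yields perfectness and the width bound simultaneously --- no separate proof that $Z(G)\subseteq[G,G]$ is required.
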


\begin{fact}\cite[6.1.1]{Lou} \label{fact:section}
Let $\M$ be an o-minimal expansion of an ordered group. Each definable equivalence relation on a definable set $X$ has a definable set of representatives. In particular, any definable surjective map $f \colon X \to Y$ admits a definable section $s \colon Y \to X$.
\end{fact}

\begin{proof}[Proof of \Cref{theo:levi}]
Let $G$ be a definably connected group and $R$ its solvable radical. We will prove our statements by induction on $\dim G$.

Suppose first the structure expands a field and let $\mathfrak{g}$ be the Lie algebra of $G$.  
Fix $S$ an ind-definable semisimple Levi subgroup from \Cref{fact:CP-levi}. Then $S$ is pefect by 
Lemma 2.7 in \cite{CP-Levi}. If $S = \bigcup X_i$, by saturation there is some definable $X_k$ on which 
the canonical projection $X_k \to G/R$ is surjective. Let $s \colon G/R \to X_k$ be a definable section and set $X=s(G/R) \subset S$. We claim that the subgroup generated by $X$ coincides with $S$. 

Set $H = \la X \ra \subseteq S$. Since $S$ is a central extension of $G/R$ and $X$ maps surjectively onto $G/R$, it follows that $H$ is a normal subgroup of $S$ and the quotient $S/H$ is abelian, as it is isomorphic to a quotient of $Z(S)$. However, $S$ is perfect, so $S = H$, as claimed.

To see that $S$ is a minimal (perfect) complement of the solvable radical, suppose $P \subseteq S$ is a (perfect or not) subgroup such that the canonical homomorphism $P \to G/R$ is surjective. As noted before for $H$, $P$ is then a normal subgroup of $S$ with abelian quotient $S/P$ and $S = P$, because $S$ is perfect. 

Conversely, suppose $S$ is an ind-definable Levi subgroup of $G$ and let $\mathfrak{s}$
be its Lie algebra. By the Levi decomposition of  Lie algebras (see, for instance, \S 5.6 in \cite{HN}),  $\mathfrak{s}$
contains a maximal semsimple Lie subalgebra $\mathfrak{s}_1$, that is a maximal semisimple Lie subalgebra of $\mathfrak{g}$ too, because $G = RS$. Let $S_1$ be the   
unique ind-definable semisimple subgroup $S_1$ of $G$ whose Lie algebra is $\mathfrak{s}_1$ from \Cref{fact:CP-levi}. Then $S_1$ is an ind-definable semisimple Levi subgroup of $G$ contained in $S$. As showed above, $S_1$ is a minimal complement of the solvable radical, and it must be $S = S_1$ by minimality of $S$.

Therefore, if the structure expands a field, the ind-definable semisimple Levi subgroups of $G$ in \Cref{fact:CP-levi} are precisely the ind-definable Levi subgroups in our statement.

We now need to show that even when the structure does not expand a field, and $G$ does not necessarily have a well-defined Lie algebra, $G$ contains ind-definable Levi subgroups  and they are all conjugate.

\medskip
Suppose first $G$ is a central extension of a perfect group (definable or not). We can show that in this case the commutator subgroup $S = [G, G]$ of $G$ is the \emph{only} perfect subgroup $P$ of $G$ such that $G = Z P$, where $Z$ is \emph{any} central subgroup of $G$ such that $G/Z$ is perfect. 

Define $\bar G = G/Z$, $N = Z S$ and $\bar N$ the image of $N$ in $\bar G$ under the natural homomorphism $G \to \bar G$. Because 
$Z$ and $S$ are normal subgroups of $G$, so is their product $N$. The quotient $G/N$ is abelian, as it is a quotient of $G/[G, G]$, and we have
\[
G/N = (G/Z)/(N/Z) = \bar{G}/\bar{N}.
\]

However, $\bar G$ is perfect, so it must be $\bar{G} = \bar{N}$
and $G = Z S$. It follows that $[G, G] = [S, S] = S$, and $S$ is perfect.

Suppose $P$ is \emph{any} subgroup such that $G = Z P$. Let $Y \subset P$ be the image of a section $\bar{G} \to G$ of the canonical projection $G \to \bar{G}$. Then for any $a, b \in G$ there are $y_1, y_2 \in Y$ and $z_1, z_2 \in Z$ such that $a = z_1y_1$ and $b = z_2y_2$. Hence $[a, b] = [y_1, y_2] \in P$ and $P$ contains $[G, G]_1$. It follows that $S$ is the \emph{unique} smallest subgroup of $G$ on which the canonical homomorphism $G \to G/Z$ is surjective.

 Suppose now $P$ is any perfect subgroup of $G$. It is easy to see that $P \subseteq S$. Indeed, the group $(PS)/S$ is abelian, as it is a subgroup of $G/[G, G]$. Since
$(PS)/S = P/(P \cap S)$, it follows that $P \cap S = P$, because the quotient of any perfect group is still perfect. Therefore, $S = [G, G]$ is the only perfect subgroup $P$ of $G$ such that $G = Z P$, as claimed.

In particular, when $G$ is a central extension of a definable semisimple group, $R = Z(G)^0$, $G/R$ is perfect (\Cref{fact:finite-width}), and the above applies.

By taking $X_n = [G, G]_n$, we see that $S = \bigcup X_n$ is ind-definable.

Suppose, by way of contradiction, $S$ is not locally definably connected, and let $H$ be a proper subgroup of $S$ violating \Cref{dfn:locally-def-connected}. Let $r$ be the commutator width of $G/R$ from \Cref{fact:finite-width}. The canonical homomorphism $G \to G/R$ is surjective on $X_r$. As $X_r$ meets only finitely many distinct cosets of $H$ in $S$, it follows that the image of $H$ in $G/R$ is an ind-definable subgroup of finite index, and it must be the whole group, because $G/R$ is definably connected.
 However, $S$ is the smallest subgroup of $G$ such that $G = R S$, so $H = S$ and $S$ is locally definably connected. In conclusion, $S$ is the unique ind-definable Levi subgroup of $G$.

If the structure expands a group, let $s \colon G/R \to X_r$ be a definable section of the canonical projection $X_r \to G/R$. The subgroup $P$ generated by $s(G/R)$ in $G$ clearly satisfies $G = RP$, and $P$ contains $S$ because $R \subseteq Z(G)$ and $G/R$ is perfect. Therefore, $P = S = \la s(G/R) \ra$.

\medskip
Assume now $Z(G)$ is finite. We show below that in this case the ind-definable Levi subgroups are definable and semisimple.

The quotient $G/Z(G)$ is centerless, because $G$ is definably connected. By \cite[Theorem 3.1 \& 3.2]{PPSI}, there are definable real closed fields $\Rs_1, \dots, \Rs_k$ such that $G/Z(G)$ is a direct product $G_1 \times \dots \times G_k$ where each $G_i$ is definably isomorphic to a semialgebraic subgroup of $\GL_n(\Rs_i)$, for some $n \in \N$. By \cite[Theorem 4.5]{PPSIII},
each $G_i$ can be decomposed as $G_i = R_iS_i$, where $R_i$ is the solvable radical of $G_i$ and $S_i$ is definable semisimple. Moreover, each $S_i$ is an ind-definable semisimple Levi subgroup in the sense of \Cref{fact:CP-levi}, and unique up to conjugation, by \cite[Lem 4.1 \& 4.2]{CP-Levi}.

Let $S$ be the definably connected component of the pre-image in $G$ of $S_1 \times \dots \times S_k$. Then $S$ is definable and semisimple, because $Z(G)$ is finite.
Note that $R$ is the definably connected component of the pre-image of $R_1 \times \dots \times R_k$, and $G = RS$. By \Cref{fact:finite-width}, $S$ is perfect.

Suppose $P \subseteq S$ is any subgroup such that $G = RP$. Note that because $S$ is definable semisimple, the normal solvable subgroup $R \cap S$ is finite, hence central in $S$. Thus $S$ is a central extension of the definable semisimple group $G/R$ by the finite group $R \cap S$. Since $P$ is contained in $S$ and $G = RP$, $P$ is a central extension of the definable semisimple group $G/R$ by the finite group $R \cap P \subseteq Z(S)$. Hence $P$ is definable,
and $P = S$ because $S$ is definably connected.

Suppose $P$ is another ind-definable Levi subgroup of $G$. We need to show that $P$ is a conjugate of $S$. Define $H = Z(G)P$. As $P$ is perfect, so is $H/Z(G) = P/(P \cap Z(G))$, $P = [H, H]$ and, as proved above, it is the only perfect subgroup of $H$ that maps surjectively onto $H/Z(G)$.

We claim that for each $i = 1, \dots, k$ the image $P_i$ of $P$ in $G_i$ is an ind-definable
Levi subgroup. Since $G = RP$, clearly $G_i = R_i P_i$. Suppose 
$Q_i$ is a perfect subgroup contained in $P_i$ and such that $G_i = R_i Q_i$.
Define $Q$ to be the pre-image of $Q_1 \times \cdots \times Q_k$ in $G$. Then $[Q, Q]$ is a perfect subgroup of $H$ mapping surjectively onto $Q_1 \times \cdots \times Q_k$ and therefore onto $H/Z(G)$. Hence $P = [Q, Q]$ 
and $Q_i = P_i$ for each $i = 1, \dots, k$. 

By the field case, $P_i$ is a conjugate of $S_i$ for each $i = 1, \dots, k$. Say $P_i = S_i^{g_i}$. For any $g \in G$ in the pre-image of $(g_1, \dots, g_k) \in G/Z(G)$, $S^g$ is a perfect
subgroup of $H$ such that $H = Z(G)S^g$. Hence $S^g = P$, as we wanted.

If the structure expands a group, the same argument used for the field case shows that $S$ is the subgroup generated by the image of a definable section $s \colon G/R \to S$  of the canonical homomorphism $S \to G/R$, because $S$ is again a central extension of $G/R$.

 \medskip
 Suppose $Z(G)$ is infinite. Set $\bar{G} = G/Z(G)^0$. The case where $\bar G$ is semisimple has already been considered, so assume $\bar{G}$ is not semisimple and let $\bar R$ be its infinite solvable radical. By induction hypothesis, $\bar{G}$ contains
some ind-definable Levi subgroup $\bar S$. Set $K$ to be the pre-image of $\bar{S}$ in $G$ and $S = [K, K]$ its commutator subgroup. We claim that $S$ and its conjugates are the ind-definable Levi subgroups of $G$.

By induction hypothesis, $\bar{S}$ is a smallest perfect subgroup of $\bar G$ such that $\bar{G} = \bar{R} \bar{S}$. As $K$ is a central extension of the perfect $\bar S$, it has already been showed that $S$ is the \emph{only} perfect subgroup of $K$ such that $K = Z(G)^0 \cdot S$. Since $G = RK$ and $Z(G)^0 \subset R$, $S$ is a smallest perfect subgroup of $G$ such that $G = RS$.

To see that $S$ is unique up to conjugation, assume $P$ is another ind-definable Levi subgroup of $G$ and define $H = Z(G)^0 \cdot P$. Once again, $P = [H, H]$ is the only perfect subgroup of $H$ that is a complement of $Z(G)^0$ in $H$. Therefore, the image of $P$ in $\bar G$ is an ind-definable Levi subgroup, and a conjugate of $\bar S$ by induction hypothesis. Hence $H = K^g$ and $P = [K^g, K^g] = [K, K]^g = S^g$, as we wanted.

Suppose the structure expands a group. If $S = \bigcup X_i$, by saturation there is a definable $X_k$ such that the restriction of the canonical homomorphism $G \to \bar G$ to $X_k$ is surjective. Let $s_1 \colon \bar G \to X_k$ be a definable section.  

By induction hypothesis, there is a definable section $\bar s \colon \bar G/\bar R \to \bar G$
of the canonical homomorphism $\bar G \to \bar G/\bar R$ such that $\bar S$ is the subgroup generated by its image $\bar s(\bar G/\bar R) = \bar X$. 

Note that $\bar G/\bar R$ coincides with $G/R$, so we can assume $G/R$ is the domain of $\bar s$.

Define $s \colon G/R \to X_k$ to be
the composition of $\bar s$ and $s_1$. Clearly, $s$ is a definable section of the canonical homomorphism $G \to G/R$. Set $X = s(G/R)$, $H = \la X \ra$ and $B = Z(G)^0 H$. The image $\bar H$ of $H$ in $\bar G$ is a subgroup containing $\bar X$, so $\bar S \subseteq \bar H$ and $B$ contains $K$. Thus $[B, B] = [H, H]$ contains $[K, K] = S$ and $S \subseteq H$. On the other hand, $X \subset S$ so $H \subseteq S$. Therefore $S = H$, as we wanted.
\end{proof}
 
\begin{rem}
As suggested by a reviewer, the assumption that the structure expands a group may be not necessary for $S$ to be the subgroup generated by a definable section of the canonical homomorphism $G \to G/R$. Although we agree that it is possible the assumption is not necessary, unfortunately we are not able to prove that an appropriate definable section exists in general.
\end{rem}

\begin{rem}
We do not know whether the ind-definable Levi subgroups we find in arbitrary o-minimal structures are always semisimple in the sense of \Cref{dfn:semisimple}. The issue boils down to Question at pg. 87 in \cite{HPP} asking if $Z(G) \cap [G, G]_n$ is finite for each $n$, whenever $G$ is a definably connected central extension of a definable semisimple group.  
\end{rem}


 \section{$0$-groups and $p$-groups}

The goal of this section is to study $p$-groups, for $p=0$ or $p$ prime, and prove \Cref{theo:sylow} and \Cref{cor:p-solvable}.

\begin{fact}\cite[2.6, 2.22]{Strzebonski} \label{fact:hull}
Let $X \subset G$ be a subset (definable or not). There is a smallest definable subgroup $H < G$ containing $X$. We call it \emph{the definable subgroup generated by $X$}, and we write $H = \langle X \df$. If the elements of $X$ commute pairwise, $H$ is abelian.
\end{fact}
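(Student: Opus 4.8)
The plan is to treat the two assertions in turn: the existence of a smallest definable subgroup containing $X$, and then its commutativity under the extra hypothesis. For existence, I would not try to intersect the (possibly huge) family of all definable subgroups containing $X$ directly, since an arbitrary intersection need not be definable. Instead I would invoke the descending chain condition on definable subgroups of $G$. The family $\mathcal{F}$ of definable subgroups containing $X$ is nonempty (it contains $G$), so it has a minimal element $H$. If $H_1, H_2 \in \mathcal{F}$ are both minimal, then $H_1 \cap H_2$ is again a definable subgroup containing $X$ and contained in both, so minimality forces $H_1 = H_1 \cap H_2 = H_2$. Hence the minimal element is unique and is the smallest element of $\mathcal{F}$; this is the sought $\langle X \df$.

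The hard part is justifying the descending chain condition, and this is the main obstacle. I would rule out an infinite strictly descending chain $H_0 \supsetneq H_1 \supsetneq \cdots$ of definable subgroups as follows: the dimensions $\dim H_i$ form a non-increasing sequence of natural numbers, hence are eventually constant; past that point each inclusion $H_{i+1} \subsetneq H_i$ has equal dimension and therefore finite index. Consequently the definably connected components $H_i^0$ stabilize, because $H_{i+1}^0 \subseteq H_i^0$ has finite index and a definably connected group admits no proper definable subgroup of finite index (its $G^0$ being already the smallest finite-index definable subgroup). But then the finite quotients $H_i/H_i^0$ would constitute an infinite strictly descending chain of finite groups, which is impossible.

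For the commutativity statement, suppose the elements of $X$ commute pairwise and set $H = \langle X \df$. The argument is a twofold application of the minimality defining $H$, using only that the centralizer of a single element is a definable subgroup. First, for each $x \in X$ the centralizer $C_G(x)$ is a definable subgroup containing $X$ (all elements of $X$ commute with $x$), so $H \subseteq C_G(x)$; thus every element of $H$ commutes with every element of $X$. Second, for each $h \in H$ the centralizer $C_G(h)$ is a definable subgroup, and by the previous step it contains $X$, so again $H \subseteq C_G(h)$; that is, every element of $H$ commutes with $h$. Since $h$ was arbitrary, $H$ is abelian. This half requires no new input beyond the definability of centralizers and the minimality of the definable hull, so once existence is in hand it is entirely routine.
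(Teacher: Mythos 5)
The paper does not prove this statement at all -- it is quoted as a Fact with a citation to Strzebonski -- so there is no in-paper argument to compare against; your proof is correct and is essentially the standard one underlying the cited reference. Both halves are sound: the descending chain condition argument (dimension stabilizes, then equal-dimension inclusions have finite index since $\dim H_i/H_{i+1}=0$, then the definably connected components stabilize and the finite quotients cannot descend forever) is the classical Pillay DCC proof, and the double-centralizer trick correctly exploits minimality of $\langle X \df$ together with the definability of centralizers to get commutativity.
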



\begin{theo}\label{theo:p-groups:list}
Let $G$ be a definable group.  

\begin{enumerate}
\item Suppose $p$ is a prime number. Then $G$ is a $p$-group if and only if $G^0$ is a $0$-group and $G/G^0$ is a finite $p$-group.  In particular, if $G$ is definably connected, $G$ is a $p$-group if and only if $G$ is a $0$-group. \label{pt:p-groups:list:p}

\medskip
\item Suppose $G$ is abelian. Then $G$ is a $0$-group if and only if $G$ is   definably connected and it is the definable subgroup generated by its torsion elements.  
\label{pt:p-groups:list:0}
\end{enumerate}
\end{theo}

\begin{proof}
\begin{enumerate}

\item Suppose $G$ is a definable $p$-group. Then 
\[
E(G/G^0) = |G/G^0| \equiv 0 \mod p.
\]

Suppose, by way of contradiction, $G/G^0$ is not a $p$-group. That is,
$|G/G^0| = np^a$, where $a \geq 1$ and $n > 1$ is not a multiple of $p$. Let $P$ be a $p$-Sylow subgroup of $G/G^0$ and $H$ its pre-image in $G$. Then $E(P) = |P| = p^a$ and $E(G/H) = n$, in contradiction with the fact that $G$ is a $p$-group.  

Next, we want to show that $G^0$ is a $0$-group. 

If $G^0$ is non-trivial, then $E(G^0) = 0$, otherwise $E(G^0) = \pm1$ by \Cref{fact:tricotomy} and $G$
 cannot be a $p$-group. Let $A$ be a $0$-Sylow subgroup of $G^0$. Since $E(G/G^0) = |G/G^0| \neq 0$, $A$ is a 
$0$-Sylow of $G$ too and $E(G/N_G(A)) = 1$ by \Cref{fact:str}(6). But $G$ is a $p$-group, so $G = N_G(A)$ and $A$ is normal in $G$. By \Cref{prop:Anormal}, $G^0$ is solvable and there is a finite subgroup $F$ such that
\[
G/A = (\mtf(G)/\mtf(A)) \rtimes F. 
\] 

\vs \noindent
Let $K$ be the pre-image of $F$ in $G$. Then $E(G/K) = E(\mtf(G)/\mtf(A)) = \pm 1$. Since $G$ is a $p$-group, $\mtf(G) = \mtf(A)$ and $G^0 = \mtf(G) A = A$ 
(\Cref{fact:me-solvable}).  

Conversely, suppose $G^0$ is a $0$-group and $G/G^0$ is a finite $p$-group. Let $H$ be a proper definable subgroup of $G$. We need to show that $E(G/H) \equiv 0 \mod p$.

If $H \cap G^0 = G^0$, then $G^0 \subseteq H$. We have $E(G/G^0) = E(G/H)E(H/G^0) = p^a$, because $G/G^0$ is a finite $p$-group. So $E(G/H) = p^{a - b}$, where $E(H/G^0) = p^b$.

If $H \cap G^0 \subsetneq G^0$, then $E(G^0/(H \cap G^0)) = 0$, because $G^0$ is a $0$-group. Note that $E((G^0H)/H) = 0$, as $G^0/(H \cap G^0)$ is in definable bijection with $(G^0H)/H$. Moreover, 
\[
E(G/H) = E(G/(G^0H)) E((G^0H)/ H) = 0. 
\]

\vs \noindent
In either case, $E(G/H) \equiv 0 \mod p$, and $G$ is a $p$-group.

\medskip
\item If $G$ is a $0$-group, then $G$ is definably connected by \Cref{fact:str}(3). Suppose $K$ is the definable subgroup generated by the torsion elements of $G$. Then $G/K$ is torsion-free and $E(G/K) = \pm 1$ by \Cref{fact:str}. Since $G$ is a $0$-group, it must be $G = K$.

 Conversely, suppose $G$ is definably connected and equal to the definable subgroup generated by its torsion elements. Let $H$ be a proper definable subgroup of $G$. 
If $E(G/H) \neq 0$, then $|E(G/H)| = 1$ by \Cref{fact:tricotomy}. Therefore, 
$G/H$ is torsion-free and $H$ contains the torsion of $G$, in contradiction with the minimality of $G$. So $E(G/H) = 0$ and $G$ is a $0$-group.

\end{enumerate}
\end{proof}

\begin{lem}\label{lem:0-Syl-in-abelian}
Let $S$ be a definable subgroup of a definably connected abelian group $G$.
Then $S$ is the $0$-Sylow of $G$ if and only if $S$ is the definable subgroup generated by the torsion of $G$.
\end{lem}

\begin{proof}
Suppose $S$ is the $0$-Sylow of $G$. Since $G$ is definably connected, the group $G/S$ is definably connected too. Moreover, $|E(G/S)| = 1$ by \Cref{fact:str}(5) and \Cref{fact:tricotomy}. Hence $G/S$ is torsion-free by \Cref{fact:str}(1), and $S$ contains the torsion of $G$. Suppose, by way of contradiction, $H$ is a proper definable subgroup of $S$
containing the torsion of $G$. Then $G/H$ is torsion-free and $E(G/H) = \pm 1$. By \Cref{fact:str}(5), the $0$-Sylow of $H$ is the $0$-Sylow of $G$ too, in contradiction with the fact that $S$ contains $H$ properly.

Conversely, let $S$ be the definable subgroup generated by the torsion of $G$. Since
$G/S$ is torsion-free, $|E(G/S)| = 1$. This means that the $0$-Sylow $S'$ of $S$ is the $0$-Sylow of $G$ too, because 
\[
E(G/S') = E(G/S)E(S/S') \neq 0.
\]
 
On the other hand, $S'$ contains the torsion subgroup of $G$ by the opposite implication above, so $S = S'$, as claimed. 
\end{proof}

\begin{proof}[\textbf{Proof of \Cref{theo:sylow}}]

Let $H < G$ be definable groups and $p$ a prime number.

\begin{enumerate}

\item $(i) \Rightarrow (ii).$ Suppose $H$ is a $p$-Sylow subgroup of $G$. By  \Cref{theo:p-groups:list}\eqref{pt:p-groups:list:p}, 
$H^0$ is a $0$-group. 

If $E(G) \neq 0$, $G^0$ is torsion-free by \Cref{fact:tricotomy} and $G \cong G^0 \rtimes G/G^0$ by \Cref{theo:disconnected}.  It follows that the trivial subgroup is the only $0$-subgroup of $G$ and all definable $p$-subgroups of $G$ are finite. Therefore $H^0 = \{e\} = H \cap G^0$ is the unique $0$-Sylow of $G$. Since $|E(G^0)| = 1$, $|E(G)| = 
|E(G/G^0)|$ and $H$ is a $p$-Sylow of $G/G^0$ by \Cref{fact:str}(5).

Assume $E(G) = 0$. By \Cref{fact:str}(5), $E(G/H) \neq 0 \mod p$. In particular, $E(G/H) \neq 0$. So 
\[
E(G/H^0) = E(G/H)E(H/H^0) \neq 0   
\]

\vs \noindent
and $H^0$ is a $0$-Sylow subgroup of $G$ by \Cref{fact:str}(5) again.

Set $K = N_G(H^0)$ and note that $H \subseteq K$. By \Cref{prop:Anormal}, $K^0$ is solvable and $K^0/H^0$ is torsion-free. Since $(H \cap G^0)/H^0$ is a finite subgroup of $K^0/H^0$, it must be trivial, and $H \cap G^0 = H^0$. It follows that $H/H^0$ is the image of $H$ in $G/G^0$ through the canonical homomorphism $G \to G/G^0$.

Set $G_1 = G/G^0$ and $H_1 = H/H^0$. By \Cref{theo:p-groups:list}\eqref{pt:p-groups:list:p}, $H_1$ is a $p$-subgroup of $G_1$. Let $P_1$ be a $p$-Sylow of $G_1$
containing $H_1$ and $P$ its pre-image in $G$. If $H_1 \neq P_1$, then $E(P_1/H_1) = 0 \mod p$ and
\[
E(G/H) = E(G/P)E(P/H) = E(G/P)E(P_1/H_1) = 0 \mod p
\]

\vs \noindent
in contradiction with $H$ being a $p$-Sylow of $G$. Therefore $H_1 = P_1$ and $H/H^0$
is a $p$-Sylow of $G/G^0$, as we wanted.

\medskip
$(ii) \Rightarrow (i).$ Suppose $H$ is a definable subgroup of $G$ such that $H^0$ is a $0$-Sylow, $H^0 = H \cap G^0$ and $H/H^0$ is a $p$-Sylow of $G/G^0$.

By \Cref{theo:p-groups:list}\eqref{pt:p-groups:list:p}, $H$ is a $p$-group. Moreover,
\[
E(G/H) = E((G/G^0)/(H/H^0)) \neq 0 \mod p
\]

\vs \noindent
because $H/H^0$ is a $p$-Sylow of $G/G^0$. Therefore $H$ is a $p$-Sylow of $G$.

\medskip \item Since a $0$-Sylow is a maximal $0$-subgroup and $0$-groups are abelian, $(i)$ and $(ii)$ are equivalent by \Cref{theo:p-groups:list}\eqref{pt:p-groups:list:0}.
We are left to show that the $0$-Sylow subgroups of $G$ coincide with the $p$-Sylow subgroups of $G^0$ for each $p$.

Suppose $H$ is a $0$-Sylow of $G$. Because $H$ is definably connected, $H \subseteq G^0$ and $H$ is a $p$-Sylow of $G^0$ by $(1)$.

Conversely, suppose $H$ is a $p$-Sylow of $G^0$. By $(1)$, $H = H^0$ and $H$ is a $0$-Sylow of $G^0$. As $E(G/G^0) = |G/G^0| \neq 0$, $H$ is a $0$-Sylow of $G$ by \Cref{fact:str}(5), since
\[
E(G/H) = E(G/G^0) E(G^0/H) \neq 0.
\]  
\end{enumerate}
\end{proof}

\bigskip
 
 \begin{proof}[Proof of \Cref{cor:p-solvable}]
Let $G$ be a $p$-group. By \Cref{theo:p-groups:list}\eqref{pt:p-groups:list:p}, 
$G^0$ is a $0$-group, hence definably generated by its torsion subgroup $T(G^0)$, by \Cref{theo:p-groups:list}\eqref{pt:p-groups:list:0}.
Let $F$ be a finite subgroup from \Cref{theo:disconnected} such that $G = FG^0$. Then 
\[
G = \la T(G^0) \cup F \df.
\]

Since finite $p$-groups are nilpotent and $0$-groups are abelian, it follows from \Cref{theo:p-groups:list} that definable $p$-groups are solvable. 
\end{proof}

\begin{rem}
Unlike finite $p$-groups, definable $p$-groups are not necessarily nilpotent, though. Consider, for instance, the group $G$ from \Cref{ex:inSL2C}. $G$ is a semialgebraic extension of $\Z/2\Z$ by a definable torus, so it is a $2$-group, but it is not nilpotent.
\end{rem}


 \section{A Jordan-Chevalley decomposition}
 
In this section we refine the decomposition established in \Cref{theo:levi} along the lines of \Cref{fact:real-algebraic}. We start by proving a crucial preliminary result following from the characterization in \Cref{theo:sylow}.

 \begin{prop} \label{prop:central}
 Every normal $0$-subgroup of a definably connected group $G$ is central in $G$.
 \end{prop}

\begin{proof}  
Let $K$ be a normal $0$-subgroup of $G$. For each $n \in \N$, the $n$-torsion subgroup $T_n$ of $K$ is definably characteristic in $K$, which is normal in $G$. Therefore, $T_n$ is a normal subgroup of $G$. Fix $x \in T_n$. The definable continuous function $G \to G$ given by $g \mapsto gxg\inv$ has image in $T_n$. However, $G$ is definably connected and $T_n$ is finite, so by taking $g = e$ we deduce that the image must be $\{x\}$, and $x \in Z(G)$. It follows that the torsion subgroup $T = \cup_{n \in \N} T_n$ of $K$ is central in $G$. As $\langle T \df= K$ by \Cref{theo:sylow}, $K$ is central too. 
\end{proof}

\begin{cor}\label{lem:GoverNG}
 Let $G$ be a definably connected group and set $\bar {G} = G/\mtf(G)$. Then $\bar{G}$ is a central extension of a semisimple definable group. 
In particular, $Z(\bar{G})^0$ is the solvable radical of $\bar G$, $[\bar{G}, \bar{G}]$ is the unique ind-definable Levi subgroup, and 
$\bar{G} = Z(\bar{G})^0[\bar{G}, \bar{G}]$.
\end{cor}

\begin{proof}
Let $\pi \colon G \to \bar{G}$ be the canonical projection. If $R$ is the solvable radical of $G$, then $\pi(R)$ is the solvable radical of $\bar{G}$. By \Cref{fact:me-solvable}, $R = \mtf(G) \rtimes T$ and 
$\pi(R) = \pi(T)$ is a normal definable torus of $\bar{G}$. By \Cref{prop:central}, any normal definable torus is central. Therefore the solvable radical of $\bar{G}$ is 
$\pi(R) = \pi(T) = Z(\bar{G})^0$ and $\bar G$ is a central extension of a semisimple group. By the proof of \Cref{theo:levi}, $[\bar G, \bar G]$ is the unique ind-definable Levi subgroup.
\end{proof}

\medskip
 We are now ready to prove \Cref{theo:decomposition}:

\begin{proof}[\textbf{Proof of \Cref{theo:decomposition}}]
\begin{enumerate}

\item For ease of notation, suppose $G$ is definably connected and let $R$ be its solvable radical. We will prove our claims by induction on $\dim G$.

Define inductively a sequence of definable quotients of $G$ as follows. Set $G_0 = G$ and for each $n \in \N$, 
\[
G_{n + 1} = G_n/Z(G_n)^0.
\]

\vspace{.2cm}
There is a smallest $k \in \N$ such that $Z(G_k)$ is finite and therefore $G_{k+i} = G_k$ for each $i \in \N$. Clearly, $\dim G_{i+1} < \dim G_i$ for any $i < k$. Note also that at each step we are quotienting by a definably connected group, so the pre-image in $G_i$
of a definably connected subgroup of $G_{i+1}$ is definably connected.
 
\bigskip
Assume $k = 0$. That is, $Z(G)$ is finite. As shown in the proof of \Cref{theo:levi}, any ind-definable Levi subgroup $S$ is definable and semisimple. 

First, we want to find a $0$-Sylow  $T$ of $R$ centralizing $S$ and such that $G=NTS$, where $N = \mtf(G)$.

Define $H = N_G(S)$, the normalizer of $S$ in $G$. 
Because $E(S)= 0$, $E(H) = E(S) E(H/S) = 0$ as well.   
Let $A$ be a $0$-Sylow of $H$. We claim that $A$ is a $0$-Sylow of $G$ too. By \Cref{fact:str}(5), it suffices to prove that $E(G/A) \neq 0$.

By \Cref{lem:GoverNG}, the image of $S$ in $G/N$ through the canonical homomorphism is the commutator subgroup of $G/N$. In particular, it is a normal subgroup of $G/N$. 
Therefore, for each $g \in G$ there is some $x \in N$ such that $S^g = S^x$. This gives a well-defined definable bijection between definable sets $G/N_G(S)$ and $N/N_N(S)$. Since $N$ is torsion-free and the Euler characteristic is invariant by definable bijections, we have
\[
E(G/H) = E(N/N_N(S)) = \pm 1 
\]

\vs \noindent
by  \Cref{fact:str}(1). Hence
 \[
E(G/A) = E(G/H)E(H/A) \neq 0,
\] 

\vs \noindent
 and $A$ is a $0$-Sylow of $G$, as claimed.  Because $E(G/A) \neq 0$,  
\[
E((AR)/A) = E(R/(R \cap A)) \neq 0,
\]

\vs \noindent 
and  any $0$-Sylow of $R \cap A$ is a $0$-Sylow of $R$ too. Let $T$ be one such $0$-Sylow of $R \cap A$ and set  $K = TS$. We know that $K$ is a (definable) subgroup because $T \subset H$.  

Since $R$ is normal in $G$, it follows that $K \cap R$ is a (solvable) normal subgroup of $K$ and the definable quotient $K/(K \cap R)$ is definably isomorphic to the definable semisimple group $(RK)/R = G/R$, as $K$ contains $S$. Hence $(K \cap R)^0$ is the solvable radical of $K$ and note that $T$ is contained in it. Because $T$ is a $0$-Sylow of $R$, $T$ is a $0$-Sylow of $(K \cap R)^0$ too and $(K \cap R)^0 = \mtf(K) T$ by \Cref{fact:me-solvable}.

On the other hand, $S$ is a normal subgroup of $K$, because $K \subseteq H$. The quotient $K/S = (TS)/S$ is isomorphic to $T/(T \cap S)$. Moreover, 
$S \cap \mtf(K) = \{e\}$ because $S$ is definable semisimple, so $\mtf(K)$ embeds into $T/(T \cap S)$. Since $T \cap S \subseteq R \cap S$
is a finite group, it must be $\mtf(K) \subseteq T$. Therefore $T$ is the solvable radical of $K$.

 By \Cref{prop:central}, $T$ is central in $K$. Hence $T$ is a $0$-Sylow of the solvable radical $R$ centralizing $S$, and $G = NTS$, as wanted.

Conversely, suppose now $T$ is a $0$-Sylow of $R$. We will show there is a definable semisimple Levi subgroup $S$ centralizing $T$.   

Because $Z(G)$ is finite, the quotient $G/Z(G)$ is centerless, so by \cite[Theorem 3.1 \& 3.2]{PPSI} there are definable real closed fields $\Rs_1, \dots, \Rs_k$ such that $G/Z(G)$ is a direct product $G_1 \times \dots \times G_k$ where each $G_i$ is definably isomorphic to a semialgebraic subgroup of $\GL_n(\Rs_i)$, for some $n \in \N$.

Set $T_i$ to be the image of $T$ in $G_i$. Then $T_i$ is a $0$-Sylow $R_i$, the solvable radical of $G_i$, and $G_i = N_G(T_i)R_i$ by \Cref{lem:frattini}. 

Set $H_i = N_G(T_i)^0$. We claim that every definable semisimple Levi subgroup $S_i$ of $H_i$ is a definable semisimple Levi subgroup of $G_i$ too.   

As $G_i$ is definably connected, $G_i = H_iR_i$. The definable group 
\[
\dfrac{G_i}{R_i}\ =\ \dfrac{H_iR_i}{R_i}\ =\ \dfrac{H_i}{H_i \cap R_i} 
\] 

\vs \noindent
is semisimple. Therefore, the definably connected normal (in $H_i$) subgroup $(H_i \cap R_i)^0$ is the solvable radical of $H_i$, $G_i = H_iR_i = S_i (H_i \cap R_i)^0 R_i = S_iR_i$,
and $S_i$ is a definable semisimple Levi subgroup of $G_i$, as claimed.

Since $T_i$ is a normal subgroup of $H_i$, it is central in $H_i$
by \Cref{prop:central}. In particular, $T_i$ centralizes every definable semisimple Levi subgroup $S_i$ of $G_i$ that is contained in $H_i$. 

Set $\bar{S} = S_1 \times \cdots \times S_k$ and $S$ the definably connected component of the pre-image of $\bar{S}$ in $G$. Then $S$ is a definable Levi subgroup of $G$ centralizing $T$, as we wanted. Clearly, $G = RS = NTS$.

\bigskip
Let $k > 0$. Suppose $G_k$ is semisimple. In this case, we will show by induction on $k$ that $G$ contains a unique ind-definable Levi subgroup $S$ and the solvable radical $R$ contains a unique $0$-Sylow $T$.  

If $k = 1$, $G$ is a central extension of a semisimple group, and its commutator subgroup is the unique Levi subgroup by the proof of \Cref{theo:levi}. Since the solvable radical is abelian, it contains a unique $0$-Sylow. Suppose $k > 1$ and let $S_1$ be the unique Levi subgroup of $G_1$ by induction hypothesis. Define $H$ to be the pre-image of $S_1$ in $G$. Any Levi subgroup of $G$ must be contained in $H$, by uniqueness of $S_1$ in $G_1$. We know that $S = [H, H]$ is the unique perfect subgroup of $H$ such that
$H = Z(G)^0 S$. Hence $S$ is the unique Levi subgroup of $G$, as claimed. 
Let $T$ be a $0$-Sylow of the solvable radical $R$ of $G$. Its images $T_1$ in $G_1$ is a $0$-Sylow subgroup of $R_1$, the solvable radical of $G_1$. By induction hypothesis, $T_1$ is unique, hence normal in $G_1$. It follows that its pre-image $K = TZ(G)^0$ is a normal subgroup of $G$. Because $K$ is abelian and definably connected, $T$ is its unique $0$-Sylow and thefore the  unique $0$-Sylow of $R$ too, as claimed.

In particular, $T$ is normal in $G$ and central by \Cref{prop:central}, yelding both implications. In this particular case, the decomposition $G = NTS$ is unique.

\medskip
Assume $G_k$ is not semisimple and set $R_k$ to be its solvable radical.

Fix an ind-definable Levi subgroup $S$ of $G$. One can easily see by induction on $k$ that the image $S_k$ of $S$ in $G_k$ through the canonical homomorpsism is an ind-definable Levi subgroup of $G_k$. By \Cref{theo:levi}, $S_k$ is definable semisimple.

Because $G_k$ has finite center by construction, by the proof of the finite center $k = 0$ case above, there is a $0$-Sylow $T_k$ of $R_k$ that centralizes $S_k$. Note that $T_k$ is a proper subgroup of $R_k$, otherwise $T_k$ would be a normal $0$-subgroup of $G_k$, hence central by \Cref{prop:central}, in contradiction with $Z(G_k)$ being finite.

Let $K$ be the pre-image of $T_kS_k$ in $G$. As noted at the beginning of the proof, $K$ is definably connected because $T_kS_k$ is definably connected. Clearly $S$ is an ind-definable Levi subgroup of $K$ too. As $T_k \neq R_k$ and $S_k$ is definable semisimple, $\dim K < \dim G$. By induction hypothesis, $K = \mtf(K)TS$, where $T$ is a $0$-Sylow  of the solvable radical $\mtf(K)T$ centralizing $S$. Since $G = NK$, $G = NTS$ and we are done.

Conversely, let $T$ be a $0$-Sylow of the solvable radical $R$. The (possibly trivial) image $T_k$ of $T$ in $G_k$ is a $0$-Sylow of the solvable radical $R_k$.

As noted in the proof of the case $k = 0$,
the normalizer of $T_k$ in $G_k$ contains some definable Levi subgroup $S_k$ of $G_k$.
Once again, let $K$ be the pre-image in $G$ of $T_kS_k$. Because $T_k \neq R_k$, $\dim K < \dim G$ and $G = NK$. By induction hypothesis, there is an ind-definable Levi subgroup $S$ of $K$ centralizing $T$. Clearly $S$ is an ind-definable Levi subgroup of $G$ too.

\medskip
\item Fix a decomposition $G^0 = NTS$ as above. We claim that for each $g \in G$, the conjugate $S^g$ is an ind-definable Levi subgroup of $G^0$.

Let $k \in \N$ be the smallest $n$ such that $G^0_n = G^0_{n+1}$ as in $(1)$. We can prove our claim by induction on $k$. If $k=0$,
$S$ is definable semisimple, and so is $S^g$. Thus $R \cap S^g$ is finite and since $\dim S = \dim S^g = \dim (G/R)$, the subgroup $RS^g$ must coincide with $G^0$. 

Suppose $S_1 \subseteq S^g$ is a smallest (perfect) subgroup such that $G^0 = RS_1$.
As noted in the proof of \Cref{theo:levi}, because $S^g$ is a central extension of $G^0/R$, $S_1$ is a normal subgroup of $S^g$ and the group $S^g/S_1$ is abelian, as it is a quotient of $Z(S^g)$. However, $S^g$ is perfect, so $S_1 = S^g$ and $S^g$ is an ind-definable Levi subgroup.

Let $k > 0$. The image of $S^g$ in $G^0_1$ is an ind-definable Levi subgroup by induction hypothesis, hence a conjugate of the image of $S$ in $G^0_1$ by \Cref{theo:levi}. Therefore $S^g$ is a conjugate of $S$ in $G^0$ and an ind-definable Levi subgroup of $G^0$.

It follows that $G = N_G(S)R$. Indeed, because $S^g \subset G^0$ is a ind-definable Levi subgroup, there is $x \in G^0$ such that $S^g = S^x$, by \Cref{theo:levi}. Since $G^0 = SR$, $x=sr$, for some $s \in S$ and $r \in R$, so $S^x = S^r$ and $gr\inv \in N_G(S)$. As $g = (gr\inv) r$, it follows that $G = N_G(S)R$, as claimed. 

Next, we want to show that $N_G(S)$ is definable. First, we can see that the normalizer of $S$ in $G^0$ is definable by induction on $k$. If $k= 0$, $S$ is definable, and so is its normalizer. Let $k > 0$. If $G^0_k$ is semisimple, $S$ is normal in $G^0$.
Assume $G^0_k$ is not semisimple. Let $S_1$ be the ind-definable Levi subgroup of $G^0_1$ corresponding to $S$ (that is, $S_1$ is the image of $S$ in $G^0_1$), $B_1$
the normalizer of $S_1$ in $G_1^0$, and $B$ the pre-image of $B_1$ in $G^0$. 
By the proof of \Cref{theo:levi}, recall that $S$ is the commutator subgroup $[K, K]$ of the pre-image $K$ of $S_1$ in $G^0$ and $S$ is the unique perfect subgroup $P$ of $K$ such that $K = Z(G^0)^0P$. 

We claim that $B$ is the normalizer of $S$ in $G^0$. If $x \in B$, the image of the conjugate $S^x$ in $G_1$ is $S_1$. Therefore $S^x$ is in $H$, and by uniqueness of $S$ it must be $S^x = S$. Hence $B$ is contained in the normalizer of $S$ in $G^0$. The converse is obvious. By induction hypothesis $B_1$ is definable, and so is $B$.

Let $F_1$ be a finite subgroup of $G$ from \Cref{theo:disconnected}, so that for each $g \in G$ there is a definable map $G \to F_1 \times G^0$ such that $g \mapsto (x, g_0)$
with $g = xg_0$. Suppose $f \colon F_1 \to G^0$  is a map such that $S^x = S^{f(x)}$ for each $x \in F_1$. Note that we can take $f$ to be definable because the normalizer of $S$ in $G^0$ is definable. Then for $g = xg_0$, $S^g = S^{xg_0} = S^{f(x)g_0}$ and the map $G \to G^0$ given by $g \mapsto f(x)g_0$ is definable.  
Because $N_{G^0}(S)$ is definable, $N_G(S)$ is definable as well. Note that $T \subseteq N_G(S)$.

 Let $A$ be a $0$-Sylow of $N_G(S)$ containing $T$, and let 
$K$ be the normalizer of $A$ in $N_G(S)$. By \Cref{theo:disconnected}, $N_G(S) = F N_G(S)^0$ for some finite subgroup $F$ contained in $K$ and 
\[
G = N_G(S)R = F N_G(S)^0 R = FG^0
\]

with $F \subset N_G(T) \cap N_G(S)$, as we wanted. In particular, $FT$ and $FS$ are subgroups of $G$, and so are $FN$, $NT$, $NS$, since $N$ is normal in $G$.
Moreover, $NS$ is a normal subgroup of $G$. Indeed, for any $g \in G$, 
\[
(NS)^g = NS^g = NS^{g_0} 
\]

for some $g_0 \in G^0$, as noticed before. Write $g_0 = sta$ with $s \in S$, $t \in T$
and $a \in N$. Then $S^{g_0} = S^{sta} = S^a$ and $NS^{g_0} = NS^a = NS$.

It follows that $FNS$ is a subgroup of $G$ and so are $FNT$ ($NT = R$ is normal in $G$),
$FTS$ (because $N_G(T) \cap N_G(S) \subseteq N_G(TS)$) and $NTS = G^0$.
\end{enumerate}
\end{proof}

Recall that a subgroup $T$ of a definable group $G$ is an \emph{abstract torus} if there is a definable group $G_1$ and a definable homomorphism $f \colon G \to G_1$ whose restriction to $T$ is an abstract isomorphism with a definable torus of $G_1$. A \emph{definable torus} is an abelian definably connected definably compact group.

\begin{fact} \cite[Lem 2.14]{me-nilpotent}\label{fact:abstract-outside-tf}
The intersection of an abstract torus and a torsion-free definable subgroup is trivial in any definable group.
\end{fact}

\begin{proof}
[Proof of \Cref{theo:Nsplitting}] 
\begin{enumerate}[(i)]
\item Assume $G$ is definably connected. By \Cref{theo:levi}, $G$ admits an ind-definable Levi decomposition $G = RS$, where $S$ is ind-definable semisimple. Suppose $S$ is definable.

 Set $N = \mtf(G)$. Let $T_1$ be a $0$-Sylow subgroup of $R$ centralizing $S$ from \Cref{theo:decomposition}, so that $G = NT_1S$. Note that $T_1 = \mtf(T_1) \times T$, where $T$ is an abstract torus, because $T_1/\mtf(T_1)$ is definably compact. Set $H = TS$. We claim that $H$ is a semidirect cofactor of $N$ in $G$. That is, we have to show that $N \cap H = \{e\}$. 

If not, let $x \in N \cap H$ be a non-trivial element. Since $N$ is torsion-free, $x^n$ is non-trivial for any $n \in \N$. On the other hand, $x = ts$, with $t \in T$ and $s \in S$, so
$t\inv x = s \in R \cap S$, since $R = N \rtimes T$ (\Cref{fact:abstract-outside-tf}). Because $S$ is definable semisimple, the solvable normal subgroup $R \cap S$ is central in $S$ and must be finite. Let $n \in \N$ be its cardinality. Hence $(t \inv x)^n = s^n$ is the trivial element. Moreover, because $N$ is normal in $R$,
 $(t \inv x)^n = (t^n)\inv x'$ for some $x' \in N$. Thus $t^n$ is trivial, because $T \cap N$ is trivial. Recalling that $T$ centralizes $S$, we can see that $x^n = (ts)^n = t^n s^n$ is trivial, contradiction. 

Therefore, the sequence splits and $H$ is a semidirect cofactor, as claimed. Since 
$T$ is abelian, it follows that $T \subseteq Z(H)$ and $S = [H, H]$, being $S$ perfect. Moreover, $T \cap S \subseteq R \cap S$ is finite, as previously noted.

Suppose $H$ (or any other semidirect cofactor of $N$) is definable. The subgroup 
$\mtf(G) \mtf(H)$ is the pre-image in $G$ of the maximal normal torsion-free definable subgroup of $G/\mtf(G)$, and it is a normal torsion-free definable subgroup of $G$ by \Cref{fact:str}(1) and \Cref{fact:products}(a). By maximality of $\mtf(G)$, it follows that 
$\mtf(H)$ is trivial. Hence the solvable radical of $H$ is a definable torus and coincides with $Z(H)^0$. Let $A$ be the pre-image in $H$ of a $0$-Sylow $A_1$ of the definable semisimple group $S_H = H/Z(H)^0$. Since $A_1$ and $Z(H)^0$ are definably compact, $A$ is definably compact too.
Note that
\[
E(H/A) = E(S_H/A) \neq 0
\]

so $A$ is a $0$-Sylow of $H$ by \Cref{fact:str}(5). In fact, $A$ is a $0$-Sylow of $G$ too
because $G = NH$ and $|E(N)| = 1$ so $|E(G/A)| = |E(H/A)| \neq 0$.

Since $A$ is a definably compact $0$-Sylow of $G$, every $0$-subgroup of $G$ is definably compact, as claimed. (Note that $0$-groups are definably connected, so $G$ and $G^0$ are interchangeable in the claim.)
 
Conversely, assume every $0$-subgroup of $G$ is definably compact. In particular, $T_1$
is definably compact, $T_1 = T$ is definable and $H = TS$ is definable.

Suppose now $K$ is another definable semidirect cofactor of $N$. We want to show that $K$ is a conjugate of $H$.

Set $T_k = Z(K)^0$ and $H_1 = C_G(T_K)$, the centralizer of $T_K$ in $G$.
Since $T_K \subset Z(K)$, clearly $H_1$ contains $K$ and  $G = N H_1$. Moreover, because $G$ is definably connected and $N$ is torsion-free,
$H_1$ is definably connected too and it is a central extension of the definable semisimple
$G/(NT_1) = G/R$. By \Cref{theo:levi}, the commutator subgroup $S_1$ of $H_1$ is its unique ind-definable Levi subgroup and the unique perfect subgroup $P$ of $H_1$ such that $H_1 = R_1 P$, where $R_1$ is the solvable radical of $H_1$. Since $G = NH_1$, $S_1$ is an ind-definable Levi subgroup of $G$ and must be a conjugate of $S$ by \Cref{theo:levi}. 

On the other hand, $[K, K]$ too is a perfect subgroup of $H_1$ such that $H_1 = R_1[K, K]$. Therefore, it must be $[K, K] = S_1$.  

As both $T$ and $T_K$ are $0$-Sylow subgroups of the solvable radical $R$, they are conjugate to each other, say $T_K = T^g$. Hence $H_1 = C_G(T)^g$, $S_1 = S^g$
and $K = H^g$, as claimed.

\medskip
\item Let $H = TS$ be a semidirect cofactor of $N$ in $G^0$ from $(i)$. By \Cref{theo:decomposition}(2), there is a finite $F < N_G(T) \cap N_G(S)$ such that $G = FG^0$. Define $P = FH$. We claim that $N$ and $P$ have trivial intersection.

Assume $F$ has cardinality $n \in \N$ and let $a \in N \cap P$. Write $a = xh$ with $x \in F$ and $h \in H$. Then $a^n = x^n h' = h'$ for some $h' \in H$, because $H \subset G^0$ is normal in $P$.
However, $N \cap H = \{e\}$, so $a^n = e$. Because $N$ is torsion-free, it must be $a = e$, and $P$ is a semidirect cofactor of $N$ in $G$.

Clearly, $H$ is definable if and only if $P$ is definable. 

Conjugacy follows from (i) and the fact that $F \subset N_G(H)$.
\end{enumerate}
\end{proof}

\begin{proof}[\textbf{Proof of \Cref{cor:cor-linear}}]
If $G$ is a linear group, $G^0$ has definable semisimple Levi subgroups by Theorem 4.5 in \cite{PPSIII}, all $0$-subgroups of $G$ are definably compact by Lemma 2.7 in \cite{me2}, and \Cref{theo:Nsplitting} applies.

Note that for any definable image $G' = \mtf(G') \rtimes P'$ of $G$ in $\GL_n(\Rs)$, the subgroup $P'$ is semialgebraic because $T$ and $S$ are by \cite[Theorem 4.3 \& Theorem 4.6]{PPSIII}.
\end{proof}

\medskip
A natural question is whether it is possible to refine the decompositions in \Cref{theo:decomposition} and \Cref{cor:cor-linear}, when $\mtf(G)$ is not nilpotent, and find a nilpotent torsion-free semidirect factor as in \Cref{fact:real-algebraic}.

\medskip
By Fact 3.5 in \cite{BJO}, in a definable group $G$ the subgroup generated by all normal nilpotent subgroups is normal, definable and nilpotent. So it is the maximal normal nilpotent subgroup and it is definable. We call it the \emph{definable nilradical} of $G$. The definable nilradical of $\mtf{(G)}$  does not always have a semidirect cofactor in $\mtf{(G)}$ (see for instance \S 5, Ex. 6, p.126 in \cite{Bou}), therefore it is not a suitable candidate.  

\medskip
Another definable nilpotent (characteristic in $G$) subgroup of $\mtf{(G)}$ is its commutator subgroup and, in the linear case, the unipotent radical. In the example below the three groups are semialgebraic and all different:

\begin{ex} \label{ex-N}
Let \[N = \left \{ \begin{pmatrix}
1 & x & y &  &   \\
  & 1 &  z &  &   \\
   & &  1 &  &   \\
  &   & & a &   \\
  &   &  & & b \\
\end{pmatrix} \in \GL_5(\R):  a, b > 0 \right\}.\]

$N$ is a nilpotent torsion-free semialgebraic group and, denoted by $U$ its unipotent radical, then $[N, N] \subsetneq U \subsetneq N$. In fact, $[N, N] \cong (\R, +)$ corresponds to the subgroup where $x = z = 0$ and $a = b = 1$, and $U \cong \UT_3(\R)$ is the subgroup where $a = b = 1$.
 \end{ex}

The following is a linear definably connected group $G$ such that the quotient $G/U$ by the unipotent radical $U$ is \emph{not} a central extension of a semisimple group, because the unique complement of $U$ in $\mtf{(G)}$ does not centralize any Levi subgroup of $G$. 

\begin{ex} \label{ex-linear}
Let $N$ be as in the example above and consider the action of $\SL_2(\R)$ on $N$ given by:
\[
\begin{pmatrix}
1 & x & y &  &   \\
  & 1 &  z &  &   \\
   & &  1 &  &   \\
  &   & & a &   \\
  &   &  & & b \\
\end{pmatrix} \mapsto 
\begin{pmatrix}
1 & x & y &  &   \\
  & 1 &  z &  &   \\
   & &  1 &  &   \\
  &   & & e^{\alpha}  &   \\
  &   &  & & e^{\beta}  \\
\end{pmatrix} 
\]

\vs
where, for $A \in \SL_2(\R), \begin{pmatrix}
\alpha \\
\beta 
\end{pmatrix} = A \begin{pmatrix}
\ln a \\
\ln b
\end{pmatrix}$

\vs
In other words, this is the action of matrix multiplication of $\SL_2(\R)$ on $(\R, +)^2$, transferred on $(\R^{>0}, \cdot)^2$ through the exponential map. The resulting linear group $G = N \rtimes \SL_2(\R)$ is definable in the o-minimal structure $\R_{\exp} = (\R, <, + , \cdot, x \mapsto e^x)$. Its unipotent radical is the subgroup
 \[
U =  \left \{ \begin{pmatrix}
1 & x & y &  &   \\
  & 1 &  z &  &   \\
   & &  1 &  &   \\
  &   & & 1 &   \\
  &   &  & & 1 \\
\end{pmatrix} \in \GL_5(\R) \right\}
\]

\vs \noindent
and the quotient $G/U \cong \R^2 \rtimes \SL_2(\R)$ is perfect with  $Z(G/U) = Z(\SL_2(\R)) = \{\pm I\}$.
\end{ex}

\noindent
Since $[N, N] \subseteq U$, $G/[N, N]$ is not a central extension of a semisimple group either. \\

Example \ref{ex-linear} rules out other natural nilpotent candidates in $\mtf(G)$: (1) the commutator subgroup of the solvable radical $R$ (as here $R = N$) and (2) $[\mtf(G), G]$, here coinciding with $Z(N)$ (That is, the subgroup of $N$ where $x = z = 0$) and does not have a semidirect cofactor in $N$, otherwise $N$ would be abelian.

Therefore \Cref{cor:cor-linear} and \Cref{theo:decomposition} seem to be the best possible analogue to the Jordan-Chevalley decomposition in the o-minimal setting, for the linear and general case, respectively.

 \bigskip
{\bf Acknowledgments.} Thanks to Yves de Cornulier for  
the reference in \cite{Bou} of a Lie algebra whose nilradical does not split, 
and for explaining other similar supersolvable non-splitting examples. 
Thanks to several reviewers for their valuable comments and suggestions on previous and current versions of the paper.


\end{document}